\def\theenumi{\@roman\c@enumi}
\newcommand{\into}{\hookrightarrow}
\newcommand{\onto}{\twoheadrightarrow}
\newcommand{\xra}{\xrightarrow}
\theoremstyle{plain}
\newtheorem{theorem}{Theorem}[section]
\newtheorem{lemma}[theorem]{Lemma}
\newtheorem{prop}[theorem]{Proposition}
\newtheorem{cor}[theorem]{Corollary}
\newtheorem{claim*}[theorem]{Claim}
\newtheorem{thm}[theorem]{Theorem}
\theoremstyle{remark}
\newtheorem{example}[theorem]{Example}
\newtheorem{defn}[theorem]{Definition}
\numberwithin{equation}{section}
\def\<{\langle}
\def\>{\rangle}
\newcommand{\pos}{\operatorname{pos}}
\newcommand{\coker}{\operatorname{coker}}
\newcommand{\rank}{\operatorname{rank}}
\newcommand{\supp}{\operatorname{supp}}
\newcommand{\characteristic}{\operatorname{char}}
\newcommand{\kk}{\Bbbk}
\newcommand{\QQ}{\mathbb{Q}}
\newcommand{\VV}{\mathbb{V}}
\newcommand{\ZZ}{\mathbb{Z}}
\newcommand{\NN}{\mathbb{N}}
\newcommand{\WW}{\mathbb{W}}
\newcommand{\BQ}{{\operatorname{B_{\mathbb Q}}}}
\newcommand{\BQMCM}{{\operatorname{B_{\mathbb Q}^{\text{MCM}}}}}
\newcommand{\defi}[1]{{\upshape\sffamily #1}}
\newcommand{\excise}[1]{}
\newcommand{\e}{\epsilon}
\newcommand{\C}{C}
\renewcommand{\P}{P}
\title{The cone of Betti diagrams over a hypersurface ring of low embedding dimension}
\author[C. Berkesch]{Christine Berkesch}
\address{Department of Mathematics \\ Duke University, Box 90320 \\ Durham, NC 27708.}
\email{cberkesc@math.duke.edu}
\author[J. Burke]{Jesse Burke}
\address{Department of Mathematics\\ 
Universit\"at Bielefeld\\ 
33501 Bielefeld\\ 
Germany.}
\email{jburke@math.uni-bielefeld.de}
\author[D. Erman]{Daniel Erman}
\address{Department of Mathematics \\ University of Michigan \\Ann Arbor, MI 48109.}
\email{erman@umich.edu}
\author[C. Gibbons]{Courtney Gibbons}
\address{Department of Mathematics \\ University of Nebraska--Lincoln \\ Lincoln, NE 68588.}
\email{s-cgibbon5@math.unl.edu}
\thanks{CB was partially supported by NSF Grant OISE~0964985. 
DE was partially supported by NSF Award No. 1003997 and by a Simons Foundation Grant.}
\subjclass[2010]{13D02, 05E40}
\begin{document}
\vspace*{-.65cm}

\begin{abstract}
We give a complete description of the cone of Betti
diagrams over a standard graded hypersurface ring of the form
$\kk[x,y]/\<q\>$, where $q$ is a homogeneous quadric. 
We also provide a finite algorithm for decomposing
Betti diagrams, including diagrams of infinite projective dimension,
into pure diagrams. 
Boij--S\"oderberg theory completely describes the cone of Betti diagrams over a 
standard graded polynomial ring; our result provides the first example 
of another graded ring for which the cone of Betti diagrams is entirely understood.
\end{abstract}

\maketitle
\section{Introduction}
\label{sec:intro}

In an important shift in perspective to the study of graded free resolutions, 
Boij and S\"oderberg suggested that their numerics are easier to understand 
``up to scalar multiplication''~\cite{boij-sod1}. More specifically, for minimal free 
resolutions of graded modules over the standard graded  polynomial ring, they 
formulated precise conjectures about the possible Betti diagrams of such modules, 
including a description of the extremal rays of the cone of all such Betti diagrams.  
The subsequent proof of their 
conjectures~\cites{efw,eis-schrey1,boij-sod2,eis-schrey2} provided a 
breakthrough in our understanding of the structure of graded free resolutions, including
a proof of the Herzog--Huneke--Srinivasan Multiplicity Conjectures~\cite[Conjectures 1 and 2]{herzog-srinivasan}.

In this paper we investigate Boij--S\"oderberg theory for graded hypersurface 
rings, where the existence of resolutions of infinite projective dimension 
complicates the picture.  
Our main result is a complete description of the cone of Betti diagrams 
over a standard graded hypersurface ring of the form 
$\kk[x,y]/\<q\>$, where $q$ is a homogeneous quadric.
As in the case of a standard graded polynomial ring, 
there is a partial order on the extremal rays of the cone which gives it 
the structure of a simplicial fan. 
We obtain a similar result for standard graded rings 
of the form $\kk[x]/\<x^n\>$ for any $n\geq 2$. 
Although there has been recent work on extending Boij--S\"oderberg 
theoretic results to rings other than 
the polynomial ring~\cites{boij-floy,floystad,otherMRC,BEKS-local}, 
the main result of this paper provides the first example of another graded ring 
for which the cone of Betti diagrams is entirely understood.

For a standard graded Noetherian commutative ring $R$ and a finitely 
generated graded $R$-module $M$ with minimal graded free resolution
$F_\bullet\colon F_0 \leftarrow F_1 \leftarrow \cdots$, 
let $F_i=\bigoplus_{j\in \mathbb Z} R(-j)^{\beta^R_{ij}(M)}$.  Set $\VV$
to be the space of column finite matrices with entries in $\QQ$,
and define the \defi{Betti diagram} of $M$, denoted $\beta^R(M)$, 
to be the matrix whose entries are given by
\[ 
\left(\beta^R(M)\right)_{i,j} := \beta^R_{i,j}(M) \in \VV.
\]
We adopt the standard Betti diagram convention 
when displaying matrices in $\VV$, writing 
\[
\beta^R(M) = \begin{pmatrix}
    \vdots & \vdots & \vdots & \\
    ^*\beta^R_{0,0}(M) & \beta^R_{1,1}(M) &  \beta^R_{2,2}(M) & \cdots
    \\
    \beta^R_{0,1}(M) & \beta^R_{1,2}(M) &  \beta^R_{2,3}(M) & \cdots  \\
    \vdots &\vdots & \vdots &
  \end{pmatrix},
  \]
where the symbol $^*$ identifies the $(0,0)$-entry (this symbol may be omitted when the indexing is clear from context).

We define the \defi{cone of Betti diagrams over $R$} to be
\[
\BQ(R) := \left \{ \sum_{M} a_M \beta^R(M) \, \Bigg\vert \, a_M \in \QQ_{\geq
    0} \text{ and  almost all $a_M$ are zero} \right \} \subseteq \VV,
\]
so that it is the positive hull of $\beta^R(M)$ for all finitely
generated graded $R$-modules $M$.

As in the case of a polynomial ring, our description of the cone of Betti diagrams 
for the hypersurfaces above depends on the notion of a \defi{pure resolution}; this is a resolution 
of the form
\[
R(-d_0)^{\beta_{0,d_0}}\gets R(-d_1)^{\beta_{1,d_1}} 
\gets  R(-d_2)^{\beta_{2,d_2}}\gets  \cdots
\]
for some integers $d_0 < d_1 < d_2 < \cdots$ with $d_i \in \ZZ
\cup {\infty}$. (By convention $R(-\infty) = 0$ and $\infty <\infty$.)
We refer to $(d_0, d_1, d_2, \ldots)$ as the \defi{degree sequence} of
the pure resolution.

The simplest hypersurface ring $R$ is one of embedding dimension 1. 
The extremal ray description of this cone, 
provided in Proposition~\ref{prop:artinian}, follows from the structure theorem of 
finitely generated modules over a principal ideal domain. We give an
equivalent description of this cone in terms of facets in Theorem \ref{thm:main artinian}.

\begin{prop}\label{prop:artinian}
Let $R=\kk[x]/\<x^n\>$. The extremal rays of $\BQ(R)$ are the rays in 
$\VV$ spanned by:
\begin{enumerate}
	\item\label{thm:artinian:fin}
	the Betti diagrams of those modules of finite projective dimension having 
	a pure resolution of the form $(d_0, \infty, \infty, \ldots)$;
	\item\label{thm:artinian:inf} 
	the Betti diagrams of those modules of infinite projective dimension having 
	a pure resolution of type $(d_0,d_1,d_0+n,d_1+n,\ldots)$.
\end{enumerate}
\end{prop}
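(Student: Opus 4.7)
The plan is to reduce everything to the classification of graded modules over $R$, compute the resulting Betti diagrams explicitly, and then verify extremality by a short column-by-column analysis. Since $R = \kk[x]/\<x^n\>$ is the quotient of the graded PID $\kk[x]$ by a principal ideal, a finitely generated graded $R$-module is the same as a finitely generated graded $\kk[x]$-module annihilated by $x^n$. The graded structure theorem for modules over a PID then yields a decomposition
\[
M \isom \bigoplus_{i} R(-a_i)/\<x^{k_i}\>, \qquad a_i \in \ZZ,\ 1 \le k_i \le n.
\]
Since Betti diagrams are additive under direct sums, $\BQ(R)$ is the positive hull of the Betti diagrams of the cyclic modules $R(-a)/\<x^k\>$.

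Next I would compute these basic Betti diagrams. The case $k = n$ recovers $R(-a)$, whose minimal free resolution consists of $R(-a)$ alone, giving a pure resolution of type $(a,\infty,\infty,\ldots)$ as in (i). For $1 \le k \le n-1$, using that the annihilators of $x^k$ and $x^{n-k}$ in $R$ are $\<x^{n-k}\>$ and $\<x^k\>$ respectively, one writes down the minimal free resolution
\[
R(-a) \xleftarrow{x^k} R(-a-k) \xleftarrow{x^{n-k}} R(-a-n) \xleftarrow{x^k} R(-a-n-k) \xleftarrow{x^{n-k}} R(-a-2n) \leftarrow \cdots,
\]
which is pure with degree sequence $(a, a+k, a+n, a+n+k, a+2n, \ldots)$. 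Setting $d_0 = a$ and $d_1 = a+k$, this falls into case (ii), with $d_0 < d_1 < d_0 + n$ since $1 \le k \le n-1$.

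Finally, I would check that each of these Betti diagrams spans an extremal ray. Suppose $\beta^R(N) = \sum_i c_i \beta^R(N_i)$ with every $c_i > 0$, where $N$ and each $N_i$ is one of the cyclic modules above and no $N_i$ lies on the same ray as $N$. Comparing column $0$, each $\beta^R(N_i)$ contributes a single nonzero entry $\beta^R_{0,a(N_i)} = 1$ in internal degree $a(N_i)$, so the equation forces $a(N_i) = a$ for every $i$. If $N = R(-a)$, then column $1$ of $\beta^R(N)$ vanishes, which forces every $N_i$ to be $R(-a)$, contrary to the assumption. If $N = R(-a)/\<x^k\>$ with $k < n$, then column $1$ of $\beta^R(N)$ is supported only in internal degree $a+k$, so every $N_i$ with nonzero column $1$ must satisfy $k_i = k$; but the column-$1$ entry on the right is then positive, so at least one such $N_i$ equals $R(-a)/\<x^k\>$, again a contradiction.

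The structure theorem is the essential input, after which the rest amounts to a short finite computation. The only step demanding any care is the extremality argument, but as sketched it reduces to matching the first two columns of the relevant Betti diagrams.
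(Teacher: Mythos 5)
Your proof is correct and takes essentially the same route as the paper. The paper proves this proposition in one line, asserting (both in the introduction and in the proof of Theorem~\ref{thm:main artinian}) that the extremal-ray description follows from the structure theorem for finitely generated graded modules over the PID $\kk[x]$; you have simply supplied the details that the paper leaves implicit, namely the explicit 2-periodic resolution of $R(-a)/\<x^k\>$, the resulting degree sequence $(a,a+k,a+n,a+k+n,\dots)$, and the extremality check via columns $0$ and $1$. One small point worth tightening: extremality should be verified against arbitrary elements of $\BQ(R)$, not just against the primitive cyclic diagrams themselves. The structure theorem reduces $\BQ(R)$ to $\pos(\{\beta^R(N') : N'\text{ cyclic}\})$, so if $\beta^R(N) = u + w$ with $u,w \in \BQ(R)$ one may expand both as nonnegative combinations of cyclic diagrams and then run your column argument on the totality of coefficients; the conclusion that every cyclic diagram with positive coefficient equals $\beta^R(N)$ then forces $u$ and $w$ onto the ray $\QQ_{\geq 0}\beta^R(N)$, as required. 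This is just a rephrasing of what you wrote, but it avoids the appearance of restricting attention only to decompositions into distinct cyclic summands.
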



Our main result is a complete description of the cone 
$\BQ(R)$ when $R$ is a quadric hypersurface ring of embedding dimension $2$. 
We state here its description in terms of extremal rays; see Theorem
\ref{thm:main} for a description in terms of facets.

\begin{thm}\label{thm:main quadric}
Let $q$ be any quadric in $\kk[x,y]$, and let $R=\kk[x,y]/\<q\>$.
The extremal rays of $\BQ(R)$ are the rays in $\VV$ spanned by:
\begin{enumerate}
	\item the Betti diagrams of those Cohen--Macaulay modules of finite projective
          dimension having a pure resolution of the form 
          $(d_0, d_1, \infty, \ldots)$;
	\item the Betti diagrams of those finite length modules of infinite projective 
	  dimension having a pure resolution of type 
	  $(d_0,d_1,d_1+1,d_1+2,\ldots)$.
\end{enumerate}
\end{thm}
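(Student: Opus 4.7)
My plan is to prove Theorem~\ref{thm:main quadric} in two parts: first, that every diagram in $\BQ(R)$ can be written as a $\QQ_{\geq 0}$-combination of pure diagrams from families (i) and (ii), and second, that each such pure diagram spans an extremal ray. The bulk of the work goes into the first part, via a finite decomposition algorithm. For realizability, a type (i) pure diagram with $d_0 < d_1$ comes from a finite-length Cohen--Macaulay module of projective dimension one---which is forced by Auslander--Buchsbaum since such an $M$ has depth $0$ and $R$ has depth $1$---constructed either as an explicit quotient or by pushing an Eisenbud--Fl\o ystad--Weyman pure resolution along the surjection $\kk[x,y]\onto R$. A type (ii) pure diagram with degree sequence $(d_0, d_1, d_1+1, d_1+2,\ldots)$ is realized by prepending a degree-$d_0$ generator and a degree-$d_1$ first syzygy onto the $2$-periodic linear resolution of a maximal Cohen--Macaulay $R$-module coming from a matrix factorization of $q$ whose entries are all linear; since $q$ is a quadric, any such factorization is automatically square and of constant rank, which produces the required periodic tail with differentials of degree $1$.

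\emph{Decomposition algorithm.} Given $\beta^R(M) \in \BQ(R)$, the algorithm proceeds in two stages. When $\pdim_R M = \infty$, Eisenbud's theorem says the minimal resolution becomes a matrix factorization of $q$ after finitely many steps; moreover, by reducing the degrees of the stabilized differentials one sees that those differentials may be taken to be linear, so $\beta^R(M)$ has an eventually $2$-periodic tail whose support lies along consecutive degrees. In stage one I greedily subtract nonnegative multiples of type (ii) pure diagrams, each step choosing the unique coefficient that cancels the current extremal periodic ray of the tail; this reduces the problem in finitely many steps (bounded by the rank of the stabilized matrix factorization) to a remainder with finite support. In stage two the remainder has $\pdim_R$ finite, and I apply the standard Boij--S\"oderberg-style greedy procedure---peeling off type (i) pure diagrams by matching the upper-left and lower-right corners of the remaining diagram and subtracting the largest multiple that keeps all entries nonnegative---to finish the decomposition.

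\emph{Extremality and main obstacle.} Each listed pure diagram $\pi$ spans an extremal ray because its pure support forces the same degree sequence on any summand in a $\QQ_{\geq 0}$-decomposition of $\pi$, after which Herzog--K\"uhl-type relations for $R$ force that summand to be a scalar multiple of $\pi$. The principal obstacle is the first stage of the decomposition: because $\beta^R(M)$ has infinitely many nonzero entries, I must define the greedy subtraction at the tail carefully, prove that it strictly decreases a suitable measure of the eventually periodic part, and verify that all entries remain nonnegative throughout. Once this infinite-dimensional step is under control, stage two and the extremality argument closely mirror the polynomial-ring case.
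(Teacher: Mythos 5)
Your proposal takes a genuinely different route from the paper, but the central step has a gap that coincides exactly with the obstacle you flag and do not resolve. The paper does not prove Theorem~\ref{thm:main quadric} by running a decomposition algorithm; it proves Theorem~\ref{thm:main}, which exhibits $\BQ(R)$ as equal both to the pure-diagram cone $D$ and to a halfspace cone $F$, and then deduces the extremal-ray statement via realizability (Lemma~\ref{lem:DBQ(R)}). The inclusion $\BQ(R)\subseteq F$ is proved module-by-module: the Shamash standard resolution is used to show $\alpha_k(\beta^R(M))$ equals the rank of a homotopy map, the two-step stabilization to a matrix factorization gives the $i\geq 2$ periodicity constraints, and a Hilbert-function computation yields $\gamma_k\geq 0$. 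The inclusion $F\subseteq D$ is a convex-geometry argument: the poset of $R$-degree sequences gives a simplicial fan, and one checks that every boundary halfspace of this fan is among the defining halfspaces of $F$. The greedy decomposition you describe appears in the paper only as a \emph{consequence} of the simplicial-fan structure (\S\ref{sec:mult and decomp}), not as the engine of the proof.

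The gap is in both stages of your algorithm: to know that subtracting the ``largest multiple that keeps entries nonnegative'' leaves a diagram still lying in $\BQ(R)$, you need a priori linear inequalities satisfied by every $\beta^R(M)$ that certify continued membership in the cone---entrywise nonnegativity is much too weak, and you never introduce or verify the needed functionals ($\alpha_k$, $\gamma_k$, and the periodicity relations). Without these, the greedy procedure is not known to stay inside $\BQ(R)$ even after a single subtraction, nor is there any reason the stage-one subtractions terminate on a finite-projective-dimension remainder; the rank of the stabilized matrix factorization does not obviously bound the number of steps once you are working with rational combinations of Betti diagrams rather than a single module. Your extremality argument (pure support forces the degree sequence on each summand, then Herzog--K\"uhl-type relations force the ratios $1:2:2:\cdots$) is sound, and your realizability sketch is close in spirit to the paper's Lemma~\ref{lem:DBQ(R)}, though ``pushing an EFW resolution along $\kk[x,y]\onto R$'' is not a well-defined operation here; the paper instead uses explicit quotients such as $R(-d_0)/\<\ell^{d_1-d_0}, x\ell^{d_1-d_0-1}\>$. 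In short, the hard content of the theorem is the halfspace description and the simplicial-fan structure, and your outline presupposes rather than proves them.
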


As in the main results of
Boij--S\"oderberg theory for the standard graded polynomial ring~\cite{eis-schrey1}, 
for both types of hypersurfaces $R$ above, our results provide
a simplicial fan structure on $\BQ(R)$
(for the definition of simplicial fan and other notions from convex geometry, 
see Appendix \ref{sec:notation}).

\begin{thm}\label{thm:simplicial fan}
Let $R$ be a standard graded hypersurface ring of the form 
$\kk[x]/\<x^n\>$ for any $n\geq 2$ or $\kk[x,y]/\<q\>$, where $q$ is any homogeneous quadric. 
Then the cone of Betti diagrams $\BQ(R)$ has the structure of a simplicial fan induced by a partial order on its extremal rays. 
\end{thm}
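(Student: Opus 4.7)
The plan is to adapt the simplicial fan argument of Eisenbud--Schreyer for the polynomial ring \cite{eis-schrey1} to accommodate the pure diagrams of infinite projective dimension that appear here. First I would define a partial order $\preceq$ on the extremal rays identified in Proposition \ref{prop:artinian} and Theorem \ref{thm:main quadric} by termwise comparison of degree sequences (with $d < \infty$), and declare the simplicial cones of the fan to be the positive hulls of the pure diagrams along totally ordered chains. The theorem then reduces to three ingredients: (a) linear independence of the pure diagrams along any chain, (b) existence of a decomposition of every $\beta^R(M) \in \BQ(R)$ as a nonnegative $\QQ$-combination of pure diagrams from a single chain, and (c) uniqueness of that decomposition once a chain is fixed, so that the cones of two maximal chains intersect precisely along the cone of their common subchain.

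Step (a) should follow by a triangularity argument: listing the diagrams along a chain in $\preceq$-increasing order, each successive pure diagram acquires a distinguishing nonzero entry (the leading entry in the first column where its degree sequence strictly exceeds its predecessor's), ruling out nontrivial cancellation. Step (b) is the heart of the matter: I would describe a greedy algorithm which, given $\beta^R(M)$, locates the $\preceq$-minimal pure diagram whose leading entry matches that of $\beta^R(M)$, subtracts the largest rational multiple that keeps the residual diagram in $\BQ(R)$, and iterates. The facet descriptions supplied in Theorems \ref{thm:main artinian} and \ref{thm:main} are exactly the positivity conditions one needs to verify at each step. Step (c) is then forced: the greedy choice of scalar is determined by the active facet inequality, so the decomposition coefficients along the chosen chain are unique.

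The hard part is guaranteeing that the algorithm terminates in finitely many steps when $M$ has infinite projective dimension, because each infinite-pdim extremal ray already contributes infinitely many nonzero columns. My plan is a two-stage reduction. For $R = \kk[x]/\<x^n\>$ the infinite-pdim pure diagrams are eventually $2$-periodic and are determined by the pair $(d_0, d_1)$; for $R = \kk[x,y]/\<q\>$ they stabilize as $(d_0, d_1, d_1+1, d_1+2, \dots)$, again indexed by a pair of integers. Consequently, the asymptotic behavior of $\beta^R(M)$ in high cohomological degree uniquely determines, via an elementary finite linear system, which infinite-pdim pure diagrams must appear and with what coefficients. After subtracting these, the residual diagram has finite projective dimension, and a standard polynomial-ring style greedy sweep finishes the decomposition in finitely many further steps. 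This two-stage procedure simultaneously supplies the finite decomposition algorithm advertised in the abstract and yields the face-compatibility property needed for the simplicial fan structure on $\BQ(R)$.
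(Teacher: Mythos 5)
Your high-level outline --- form a simplicial fan from chains of pure diagrams, then establish existence and uniqueness of chain decompositions --- matches the paper's strategy, but the partial order you pick is the \emph{termwise} one, and that is exactly the wrong choice here. The paper explicitly flags this at the end of \S\ref{sec:intro} (``we cannot simply use the termwise partial order on $R$-degree sequences'') as one of the new phenomena that distinguishes the hypersurface case from the polynomial case. Concretely, over $R=\kk[x,y]/\langle q\rangle$ the $R$-degree sequences $(0,3,\infty,\infty,\dots)$ and $(0,4,5,6,\dots)$ are \emph{incomparable} under termwise order (the second entries give $3<4$ while the third entries give $\infty>5$), yet both of them, together with $(0,1,2,3,\dots)$, occur in the chain decomposition of the module in Example~\ref{ex:decomposition}. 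So with your order the chain in that example does not exist, step~(b) fails, and the chain-cones cannot cover $\BQ(R)$. The orders the paper actually uses (Definitions~\ref{def:quadric:degseq} and~\ref{def:artinian:degseq}) deliberately break with termwise comparison: for $\kk[x,y]/\langle q\rangle$ one compares $(d_0,d_1)$ termwise and only compares the tail on a tie, and for $\kk[x]/\langle x^n\rangle$ every finite-pdim sequence is made comparable to every infinite-pdim one.

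Your termination argument also has a gap. For $R=\kk[x,y]/\langle q\rangle$ the tail $\{\beta_{i,j}\}_{i\geq 2}$ of the infinite-pdim pure diagram $\pi_{(d_0,d_1,d_1+1,\dots)}$ depends only on $d_1$, not on $d_0$; since a chain can contain both $(0,3,4,5,\dots)$ and $(1,3,4,5,\dots)$, the ``elementary finite linear system'' you propose to read off from the asymptotics of $\beta^R(M)$ is underdetermined, so you cannot first isolate and subtract the infinite-pdim contributions as a separate stage. The paper avoids a greedy-algorithm argument entirely: Lemma~\ref{lem:simplicial} builds the fan $\Sigma$, and the inclusion $F\subseteq\supp(\Sigma)$ (Lemma~\ref{lem:FD}, together with the analogous computation inside the proof of Theorem~\ref{thm:main artinian}) is obtained by truncating to finite-dimensional subspaces $\overline{\VV}_m$ and matching boundary halfspaces of $\overline{\Sigma}_m$ against defining halfspaces of $\overline{F}_m$ via Lemma~\ref{lem:boundaryD}. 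The finite decomposition algorithm of \S\ref{sec:mult and decomp} is then a \emph{consequence} of the simplicial structure, not the mechanism used to prove it.
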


From the simplicial fan structure, we obtain 
decomposition algorithms for $R$-Betti diagrams as in \cite{eis-schrey1,boij-sod2}, 
as well as $R$-analogues of the Multiplicity Conjectures (see \S\ref{sec:mult and decomp}). 


New phenomena arise in the hypersurface case 
that are not seen in the case of a standard graded polynomial ring.  To begin with,
some of the functionals used to provide a halfspace description
of $\BQ(R)$ have no analogue in the polynomial ring case.
One set of these functionals directly measures the nonminimality of the
\defi{standard resolution}. This resolution, introduced in
\cite[\S3]{shamash} (see also \cite[\S7]{eisenbud-ci}), builds a free $R$-resolution from a minimal
free $S$-resolution.  The resulting functionals thus directly reflect 
the passage from the polynomial to hypersurface case.

Another interesting difference comes from the simplicial structure on 
$\BQ(R)$.  Unlike the polynomial ring, we cannot simply use the termwise 
partial order on $R$-degree sequences. 
Instead, we introduce partial orders that take into
account the infinite resolutions that occur over a hypersurface ring,  
see Definitions~\ref{def:quadric:degseq} and~\ref{def:artinian:degseq}.

Finally, we observe that for hypersurface rings, 
it is no longer the case that every Cohen--Macaulay module 
with a pure resolution lies on an extremal ray. 
This already happens in the context of Theorem~\ref{thm:main quadric}. 
For instance, let $M$ be the maximal Cohen--Macaulay module 
$\<x\>\subseteq R := \kk[x,y]/\<x^2\>$. The Betti diagram of $M$ is not extremal, as it decomposes as
\begin{align*}
\beta^R(M) 
\ =\ 
\begin{pmatrix} 1&1&1&1&\cdots\\ 
-&-&-&-&\cdots \end{pmatrix}
\ =\ 
\frac{1}{2}
\begin{pmatrix}1&2&2&2&\cdots \\ 
-&-&-&-&\cdots \end{pmatrix}
+\frac{1}{2}
\begin{pmatrix}1&-&-&-&\dots\\ 
-&-&-&-&\cdots \end{pmatrix}.
\end{align*}

Consideration of more general hypersurfaces complicates this situation even 
further and suggests some of the challenges in expanding this theory to
  hypersurfaces of higher degree.
\begin{prop}
  Let $R=\kk[x_1, \dots, x_r]/\<f\>$ be any hypersurface ring with
  $r>1$ and $\deg(f)>2$.  Then $\beta^R(\kk)$ is an extremal
  ray in $\BQ(R)$ which is not pure.
\end{prop}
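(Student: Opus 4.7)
The plan is to prove the two claims separately: that $\beta^R(\kk)$ is not pure, and that the ray it spans is extremal in $\BQ(R)$. The first I would handle with a direct Shamash computation, and the second with a Hilbert-series rigidity argument. The main (and only mild) obstacle is justifying the Hilbert-series identity for the infinite minimal resolution of $\kk$.

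For the non-purity part, I would apply the Shamash construction to the Koszul $S$-resolution $K_\bullet$ of $\kk$. Since $f \in \fm^{\deg f} \subseteq \fm^2$, this produces a minimal $R$-free resolution $F_\bullet$ of $\kk$ (minimality because the Shamash homotopy for $f$ can be chosen with entries in $\fm$ when $\deg f \geq 2$), in which
\[
F_i \;=\; \bigoplus_{k \ge 0}\bigl(K_{i-2k} \otimes_S R\bigr)\bigl(-k\deg f\bigr).
\]
In particular $F_2 \cong R(-2)^{\binom{r}{2}} \oplus R(-\deg f)$. Since $r > 1$ the first summand is nonzero, and since $\deg f > 2$ the internal degrees $2$ and $\deg f$ are distinct, so $F_2$ is not generated in a single degree and $\beta^R(\kk)$ is not pure.

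For extremality, the main tool I would use is the formal power series identity
\[
H_M(t) \;=\; H_R(t) \sum_{i,j}(-1)^i \beta^R_{i,j}(M)\, t^j,
\]
valid for every finitely generated graded $R$-module $M$. Convergence as a formal power series is guaranteed by the strict growth of the initial internal degree of $F_i$ along any minimal resolution (each fixed coefficient of $t$ is a finite alternating sum, since the differentials land in $\fm F_{i-1}$). Given any nonnegative rational decomposition $\beta^R(\kk) = \sum_\ell a_\ell \beta^R(M_\ell)$ with $a_\ell > 0$, applying the identity to both sides and cancelling $H_R(t)$ would yield
\[
1 \;=\; H_\kk(t) \;=\; \sum_\ell a_\ell H_{M_\ell}(t).
\]
Because each $H_{M_\ell}(t)$ has nonnegative coefficients and each $a_\ell > 0$, every $H_{M_\ell}(t)$ must be a nonnegative constant $n_\ell \in \NN$. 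A finitely generated graded $R$-module concentrated in degree $0$ is annihilated by $\fm$, so $M_\ell \cong \kk^{n_\ell}$ and $\beta^R(M_\ell) = n_\ell\, \beta^R(\kk)$. Hence every summand in the decomposition is a scalar multiple of $\beta^R(\kk)$, which is extremality.

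The conceptual heart is the rigidity of $H_\kk(t) = 1$: once coefficients are forced to be nonnegative, there is no room for any $M_\ell$ to contribute any nontrivial Hilbert data. The only place where the infinite projective dimension of $\kk$ intrudes is in verifying the Hilbert-series identity as a formal power series, and this is controlled by the strict growth of the initial degree along a minimal resolution.
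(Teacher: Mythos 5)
Your argument is correct. The non-purity half is essentially the paper's: both compute the minimal $R$-resolution of $\kk$ via the Shamash/Tate construction on the Koszul complex and read off that $F_2$ has generators in two distinct internal degrees. Your identification $F_2 \cong R(-2)^{\binom{r}{2}} \oplus R(-\deg f)$ is the right one; the paper quotes the degrees of the generators of $\Omega^2(\kk)$ as $2$ and $\deg(f)-1$, but the generator coming from the shifted copy of $K_0$ sits in internal degree $\deg f$, as you have it (a harmless slip, since $2 \neq \deg f$ is all that is needed once $\deg f > 2$ and $\binom{r}{2}>0$).

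The extremality half is where you take a genuinely different route. The paper isolates the single linear functional $r\e_{0,0}-\e_{1,1}$: for a module $M$ generated in degree $0$, the linear first syzygies are $\kk$-linearly independent in $R_1^{\beta_{0,0}(M)}$, giving $\beta_{1,1}^R(M)\le r\,\beta_{0,0}^R(M)$, with equality iff every generator is killed by $\fm$, i.e.\ $M\cong\kk^a$. Since $\beta_{1,1}^R(\kk)=r$ saturates this bound, any decomposition $\beta^R(\kk)=\sum a_i\beta^R(M_i)$ forces each $M_i\cong\kk^{a_i}$. You instead push the decomposition through the Hilbert-series functional, whose applicability to infinite minimal resolutions you correctly secure by the strict growth of initial degree along the resolution (so each coefficient of $\sum_{i,j}(-1)^i\beta_{i,j}^R(M)t^j$ is a finite sum, and $H_M=H_R\cdot\sum_{i,j}(-1)^i\beta_{i,j}^R(M)t^j$ holds coefficient-wise). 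The rigidity of $H_\kk(t)=1$ then annihilates every positive-degree contribution from each $M_\ell$. Both arguments are sound; yours is somewhat more conceptual and makes transparent that $r>1$, $\deg f>2$ enter only into non-purity (extremality of $\beta^R(\kk)$ holds for any standard graded $R$), while the paper's is more elementary and packages the obstruction into a single linear inequality, which slots naturally into the cone-theoretic viewpoint used throughout the paper.
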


\begin{proof}
The Tate resolution of $\kk$, introduced in \cite{Tate}, is the
minimal free resolution of $\kk$. Using this resolution, since $\deg(f)>2$ and $r>1$, one
  checks that the second syzygy module $\Omega^2(\kk)$ has minimal
  generators in degrees $2$ and $\deg(f)-1$, and thus $\beta^R(\kk)$
  is not pure.  

We next claim that if $M$ is any module generated in degree $0$,
then $\beta_{1,1}^R(M)\leq r\cdot \beta_{0,0}^R(M)$ with equality
  if and only if $M$ is a direct sum of copies of $\kk$. 
To see this, we first set $a:=\beta_{0,0}(M)$.  The linear first syzygies of $M$ will be linearly independent in
  the $\kk$-vector space $R^{a}_1$; since $\dim R_1=r$, this space is $r\cdot a$-dimensional, 
  implying the inequality.
    Now, if $M\cong \kk^a$, then equality holds by the
  Tate resolution. Conversely, if $\beta_{1,1}^R(M) = r\cdot a$ then
each generator of $M$ is annihilated by $(x_1, \ldots, x_r)$, 
 and so $M\cong \kk^a$.

Finally, to see that $\beta^R(\kk)$ is extremal, suppose that
$\beta^R(\kk) = \sum a_i \beta^R(M_i)$ for $R$-modules $M_i$ and $a_i
\in \QQ_{\geq 0}$. This implies that $\sum a_i \beta_{0,0}^R(M_i) =
\beta_{0,0}^R(\kk) = 1$. Using this, and the claim above, we have
\[r = \beta_{1,1}^R(\kk) = \sum a_i \beta_{1,1}^R(M_i) \leq \sum r a_i
\beta_{0,0}^R(M_i) = r\]
and so $\sum a_i \beta_{1,1}^R(M_i) = \sum r a_i
\beta_{0,0}^R(M_i)$. Since $\beta_{1,1}^R(M_i) \leq r 
\beta_{0,0}^R(M_i)$ for all $i$, we must have equality, which by the claim implies
that each $M_i$ is a direct sum of copies of $\kk$.
\end{proof}

This paper is outlined as follows. 
In \S\ref{sec:quadric}, we consider the case of a quadric hypersurface ring with embedding dimension 2. \S\ref{sec:artinian} is dedicated to the case of embedding dimension 1. \S\ref{sec:mult and decomp} addresses Theorem~\ref{thm:simplicial fan} and the $R$-analogues of the Multiplicity Conjectures. Finally, we include Appendix~\ref{sec:notation} on convex geometry. 

\subsection*{Acknowledgements}
We would like to thank the AMS and the organizers of 
the Mathematical Research Community on Commutative Algebra in June 2010, 
where this project began; 
we especially thank David Eisenbud for inspiring us to work on this problem. 
We are also grateful to Luchezar Avramov, W. Frank Moore, and Roger
Wiegand for helpful conversations.
Our work was aided by computations performed with \cite{M2}.

\section{Resolutions over hypersurface rings of embedding dimension $2$ and degree $2$}
\label{sec:quadric}

Set $S := \kk[x,y]$ and $R := S/\<q\>$ for a quadric $q$ in $S$. 
In this section, we give a full description of the cone of Betti diagrams of
$R$-modules, including a proof of Theorem~\ref{thm:main quadric}.

\begin{defn}\label{def:quadric:degseq}
We say that 
\[
d=(d_0,d_1,d_2,\dots)\in \prod_{i\in\NN} (\ZZ\cup\{\infty\})
\] 
is an $R$-\defi{degree sequence} if it has the form
\begin{enumerate}
\item $d = (d_0, \infty, \infty, \infty, \ldots),$ 
\item $d = (d_0, d_1, \infty, \infty, \ldots)$ with $d_0 < d_1$, or
\item $d = (d_0, d_1, d_1+1, d_1 + 2, \ldots)$ with $d_0 < d_1$. 
\end{enumerate}
We define a partial order $\leq$ on $R$-degree sequences as follows.  
We do a termwise comparison on the first two entries; in the case of a tie, 
we then do a termwise comparison on the remaining entries.  In other words, 
for two $R$-degree sequences $d,d'$ we say that $d\leq d'$ if either
\begin{itemize}
	\item  $d_0\leq d_0'$ and $d_1\leq d_1'$, with one of these inequalities 
		being strict, or
	\item  $d_0=d_0'$, $d_1=d_1'$, and $d_n\leq d_n'$ for all $n \geq 2$.
\end{itemize}
\end{defn}

Definition~\ref{def:quadric:degseq} leads to a decomposition 
algorithm (see \S\ref{sec:mult and decomp}) and fits into the 
framework of~\cite{BEKS-poset}.

Recall that the $\QQ$-vector space $\VV$ is the set of column-finite 
matrices with columns indexed by $i \in \NN$ and rows indexed by $j \in \ZZ$.
For each $R$-degree sequence $d$, 
we define a matrix $\pi_d\in \VV$ as follows.  Set $(\pi_d)_{0,j} = 1$ for $j = d_0$ and $0$ otherwise.
\begin{enumerate}
\item If $d = (d_0, \infty,  \ldots)$, set $(\pi_d)_{i,j} = 0$ for all $i \geq 1$ and all $j$.
\item If $d = (d_0, d_1, \infty, \ldots)$, set $\pi_{1,j} = 1$ when $j = d_1$ and $0$ otherwise.
\item If $d=(d_0,d_1,d_1+1,d_1+2,\dots)$, set $\pi_{i,j} = 2$ when $i \geq 1$ and $j = d_i$, and $0$ otherwise.
\end{enumerate}

\begin{example} 
Three degree sequences and their corresponding Betti diagrams appear below.
\begin{align*} 
\pi_{(0,\infty,\dots)} &=
	\begin{pmatrix}
		\vdots & \vdots & \vdots & \\
	   - & - & - & \cdots \\
       ^*1 & - & - & \cdots \\
       - & - & - & \cdots \\
       - & - & - & \cdots \\
       \vdots & \vdots & \vdots &
\end{pmatrix} \ \ \ 
\pi_{(1,2,\infty,\dots)} =
	\begin{pmatrix}
	   \vdots & \vdots & \vdots & \\
	    - & - & - & \cdots \\
       ^* - & - & - & \cdots \\
       1 & 1 & - & \cdots \\
       - & - & - & \cdots \\
       \vdots & \vdots & \vdots & 
       \end{pmatrix} \ \ \ 
 \pi_{(0,3,4,5,\dots)} = 
 	\begin{pmatrix}
 	   \vdots & \vdots & \vdots & \\
 	   - & - & - & \cdots \\
       ^*1 & - & - & \cdots \\
       - & - & - & \cdots \\
       - & 2 & 2 & \cdots \\
       \vdots & \vdots & \vdots &
	\end{pmatrix}
\end{align*}
\end{example}

We define functionals on $v\in\VV$ as follows: 
\[
\e_{i,j}(v) := v_{i,j}, \quad 
\alpha_{k}(v):=\e_{1,k}(v)-\e_{2,k+1}(v),
\quad \text{and} \quad
\gamma_k(v):=\sum_{j\leq k} \left( 
2\e_{0,j}(v)-2\e_{1,j+1}(v)+\e_{2,j+2}(v)
\right).
\]
Observe that the functional $\gamma_k$ is well-defined for any $v \in \VV$ 
because $v$ is column-finite.

\begin{example}
 The functional $\gamma_2$ applied to a Betti diagram $\beta^R(M)$ is
 given by taking the dot product of $\beta^R(M)$ with the following diagram: 
\[ 	
\begin{pmatrix}
           \vdots & \vdots & \vdots & \vdots & \\
           ^*2 & -2 & 1 & 0 &\cdots \\
      	   2 & -2 & 1 & 0 &\cdots \\
      	   2 & -2 & 1 & 0 &\cdots \\
            0 & \phantom{-}0 & 0 & 0 &\cdots \\
      	   \vdots & \vdots & \vdots & \vdots &
\end{pmatrix}. 
\]
\end{example}

\begin{thm}\label{thm:main}
The following cones in $\VV$ are equal:
\begin{enumerate}
	\item  The cone $\BQ(R)$ spanned by the Betti diagrams of all finitely generated $R$-modules.
	\item  The cone $D$ spanned by $\pi_d$ for all $R$-degree sequences $d$.
	\item  The cone $F$ defined to be the intersection of the halfspaces 
		\begin{enumerate}
			\item\label{eps}  
				$\{\e_{i,j} \geq 0\}$ for all $i\geq 0$ and $j = 0$ or $2$;
			\item\label{alphone}  
				$\{\alpha_{k}\geq 0\}$ for all $k\in \ZZ$;
			\item\label{gam}  
				$\{\gamma_k\geq 0\}$ for all $k\in \ZZ$; 
			\item\label{periodic2}
				$\{\pm(\e_{i,j} -\e_{i+1,j+1}) \geq 0\}$ for $i\geq 2, j\in \ZZ$.
		\end{enumerate}
\end{enumerate}
\end{thm}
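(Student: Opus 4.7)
The plan is to establish the chain of inclusions $D \subseteq \BQ(R) \subseteq F \subseteq D$, forcing all three cones to coincide. This mirrors the tripartite strategy familiar from the polynomial ring case, but with the partial order of Definition~\ref{def:quadric:degseq} replacing the termwise order and with the new functionals $\alpha_k$ and $\gamma_k$ playing a central role.

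For $D \subseteq \BQ(R)$, I would exhibit, for each $R$-degree sequence $d$, a module whose Betti diagram is a positive rational multiple of $\pi_d$. Type (i) sequences are realized by the free modules $R(-d_0)$. Type (ii) sequences correspond to Cohen--Macaulay modules of finite projective dimension over $R$ and are realized by $R(-d_0)/(g)$ for a non-zerodivisor $g$ of degree $d_1-d_0$. Type (iii) sequences come from matrix factorizations of $q$: Eisenbud's equivalence between maximal Cohen--Macaulay modules over $R$ and matrix factorizations of $q$ produces, for each admissible pair $(d_0,d_1)$, a module whose minimal $R$-resolution is pure and one-periodic after the first step; explicit families such as shifts of the residue field $\kk$ and quotients $R(-d_0)/\mathfrak{m}^{d_1-d_0}$ realize these pure diagrams directly.

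For $\BQ(R) \subseteq F$, we check each family of inequalities against an arbitrary Betti diagram $\beta^R(M)$. Nonnegativity of $\e_{i,j}$ is immediate. The relations $\beta^R_{i,j}=\beta^R_{i+1,j+1}$ for $i\ge 2$ follow from Eisenbud's theorem that minimal free resolutions over a hypersurface become periodic after the first step, combined with the fact, specific to embedding dimension two, that every MCM module over $R$ has a matrix factorization with linear entries, yielding a one-periodic shift of one. The functionals $\alpha_k$ and $\gamma_k$ are the genuinely new ingredients, and I would analyze them via the Shamash standard resolution $\tilde F_\bullet$ built from the minimal $S$-resolution $G_\bullet$ of $M$. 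Since $\projdim_S M \le 2$, we have $\tilde F_0=G_0$, $\tilde F_1=G_1$, and $\tilde F_2=G_2\oplus G_0(-2)$, and the minimal $R$-resolution is obtained from $\tilde F_\bullet$ by cancelling matched pairs of generators across consecutive terms. The inequality $\alpha_k\ge 0$ then becomes a combinatorial bookkeeping statement on which cancellations between $\tilde F_1$ and $\tilde F_2$ in bidegrees $(\cdot,k)$ and $(\cdot,k+1)$ are possible. For $\gamma_k\ge 0$, I would use the exact sequence $0\to \Omega_3 M \to \tilde F_2 \to \tilde F_1 \to \tilde F_0 \to M \to 0$, substitute the hypersurface Hilbert series $H_R(t)=(1+t)/(1-t)$ to express $\dim M_j$ in terms of Betti numbers, and observe that the resulting expression for the partial sum defining $\gamma_k$ is nonnegative as a dimension count.

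Finally, for $F \subseteq D$, I would run a greedy decomposition algorithm with respect to the partial order on $R$-degree sequences. Given $v\in F$, the positions of the two leftmost nonzero columns of $v$ determine a minimal candidate degree sequence $d(v)$; subtracting the largest scalar $c>0$ for which $v - c\,\pi_{d(v)}$ still lies in $F$ zeroes out at least one further coordinate. Column-finiteness of $v$ and strict monotonicity along the chain of degree sequences produced guarantee termination in finitely many steps, so $v$ is realized as a non-negative rational combination of pure diagrams $\pi_d$, i.e., $v\in D$. The main obstacle will be the proof of $\alpha_k\ge 0$ and $\gamma_k\ge 0$: these inequalities have no polynomial-ring analogue, and converting the Shamash and Hilbert-series pictures into sharp bounds requires careful accounting of allowable cancellations in each bidegree. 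A secondary challenge will be verifying that the greedy step always yields a strictly positive scalar and a residue satisfying every defining inequality of $F$, which is essential for the decomposition chain to be well-defined and totally ordered.
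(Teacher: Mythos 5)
Your overall strategy --- the chain $D \subseteq \BQ(R) \subseteq F \subseteq D$, with the Shamash standard resolution controlling the middle inclusion --- matches the paper, and your realization of each $\pi_d$ (including $R(-d_0)/\mathfrak{m}^{d_1-d_0}$ for type (iii), which after passing to the algebraic closure coincides with the paper's $R(-d_0)/\<\ell^{d_1-d_0},x\ell^{d_1-d_0-1}\>$) is essentially correct. But the two inequalities you flag as ``the main obstacle'' are exactly where your sketch is missing the paper's essential ideas. For $\alpha_k\geq 0$, ``combinatorial bookkeeping of cancellations between $\tilde F_1$ and $\tilde F_2$'' does not close on its own: the direct comparison of $\beta^R_{1,k}$ with $\beta^R_{2,k+1}$ in terms of the homotopy ranks $\rank\sigma_{1,\bullet}, \rank\sigma_{2,\bullet}$ is not a clean difference. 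The paper first uses the periodicity constraint~\eqref{periodic2} to replace $\beta^R_{2,k+1}$ by $\beta^R_{3,k+2}$; then the cancellation count gives $\beta^R_{1,k}=\beta^S_{1,k}-\rank\sigma_{1,k}$ and $\beta^R_{3,k+2}=\beta^S_{1,k}-\rank\sigma_{1,k}-\rank\sigma_{2,k}$, so $\alpha_k=\rank\sigma_{2,k}\geq 0$. Without routing through $\beta^R_{3,k+2}$ there is a gap. For $\gamma_k\geq 0$ the omission is more serious: $\gamma_k$ is a \emph{partial} sum of Betti entries, and applying the Hilbert series of $R$ directly to the Shamash resolution of $M$ leaves uncontrolled contributions from degrees above the cutoff and from $\Omega_3 M$. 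The paper first replaces $M$ by the cokernel $N$ of the truncated presentation $\phi_1'\colon F_1'\to F_0'$ (keeping only generators in degree $\leq k$ and relations in degree $\leq k+1$), checks that $\beta^R_{i,j}(M)=\beta^R_{i,j}(N)$ for $i\leq 2$ and $j\leq k+i$, and only then identifies $\gamma_k(\beta^R(N))=h_{k+2}(N)\geq 0$ using $h_i(R)=2$ for $i>0$ together with~\eqref{periodic2}. Your outline, as written, does not produce the inequality.

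For $F\subseteq D$ you propose a greedy decomposition algorithm, which is a genuinely different route from the paper's. The paper constructs the simplicial fan on the poset of $R$-degree sequences, restricts to the finite-dimensional slices $\VV_m$, projects onto the first three columns, shows the resulting fan $\overline{\Sigma}_m$ is $(\dim\overline{\VV}_m)$-equidimensional, and then matches each boundary halfspace (read off from the submaximal chains of the poset via Lemma~\ref{lem:boundaryD}) against a defining inequality of $F$. A greedy algorithm could in principle yield the same conclusion, but you would need to prove that subtracting the largest allowable multiple of $\pi_{d(v)}$ leaves a residue still satisfying every inequality $\alpha_k\geq 0$, $\gamma_k\geq 0$, and~\eqref{periodic2}, and that the sequence of minimal compatible degree sequences is strictly increasing; that verification is the whole content of the inclusion and is precisely what the boundary-halfspace computation accomplishes. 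You call it ``a secondary challenge,'' but it is the step that would carry the proof, and as stated it is left open.
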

To prove Theorem~\ref{thm:main}, we show the inclusions $D\subseteq
\BQ(R)\subseteq F \subseteq D$ which are contained in Lemmas~\ref{lem:DBQ(R)}, \ref{lem:BQ(R)F}, and \ref{lem:FD}, respectively.
The proof of Lemma~~\ref{lem:DBQ(R)} is straightforward, and the
proof of Lemma~\ref{lem:FD} largely
follows Boij and S\"oderberg's techniques involving convex polyhedral geometry.
By contrast, the proof of Lemma ~\ref{lem:BQ(R)F} requires new
ideas. In particular, we use a construction due to \cite{shamash} that
constructs a (not necessarily minimal) $R$-free resolution from an
$S$-free resolution; see also \cite[\S7]{eisenbud-ci}. We briefly recall
this construction now.

Let $G_\bullet$  be a graded free $S$-resolution of an $R$-module $M$ 
(recall that $S = \kk[x,y]$) 
Since multiplication by $q$ is nullhomotopic on $G_\bullet$, 
there are homotopy maps $s_1, s_2$:
\[
\xymatrix{ 0 & \ar[l] \ar[d]_q \ar[dr]^{s_1} G_0(-2) & \ar[l]
  \ar[d]_q \ar[dr]^{s_2} G_1(-2) &
  \ar[l] \ar[d]_q G_2(-2) & \ar[l] 0 \\
0 & \ar[l] G_0 & \ar[l] G_1 & \ar[l] G_2 & \ar[l] 0.
}
\]
Now set $\overline G_i := G_i \otimes R$, $\overline \partial_i = \partial_i \otimes R$, and $\overline s_i := s_i \otimes R$ for $i = 1,2$.  The resulting complex
\[
\xymatrix@C=3pc@R=3pc{
0 & \ar[l] \overline G_0 
& \ar[l]_{\ \overline \partial_1} \overline G_1 
& \ar[l]_{\begin{tiny}\begin{pmatrix} \overline \partial_2 \, , \, \overline s_1 \end{pmatrix}\end{tiny}\ \ \ }
{\begin{matrix} \overline G_2\\ \oplus \\ \overline G_0(-2)\end{matrix}}
& \ar[l]_{\ \begin{tiny}\begin{pmatrix} \overline s_2 \\ \overline \partial_1 \end{pmatrix}\end{tiny}} \overline{G}_1(-2) 
&\ar[l]_{\begin{tiny}\begin{pmatrix} \overline \partial_2 \, , \, \overline s_1 \end{pmatrix}\end{tiny}}  {\begin{matrix} \overline G_2(-2)\\ \oplus\\ \overline G_0(-4) \end{matrix}}
&\ar[l] \cdots
}
\]
is an $R$-free resolution of $M$.  Note that there are additional maps
$G_i \to G_{i+2d-1}$ in the construction given in
\cite[\S3]{shamash}. These maps are $0$ in our context because $G_i = 0$ when $i \geq 3$.

\begin{lemma}\label{lem:DBQ(R)}
There is an inclusion $D\subseteq \BQ(R)$.
\end{lemma}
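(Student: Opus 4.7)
The goal is to exhibit, for every $R$-degree sequence $d$, a finitely generated graded $R$-module $M_d$ with $\beta^R(M_d) = \pi_d$; the inclusion $D \subseteq \BQ(R)$ then follows by taking positive rational combinations. Since $\BQ(R)$ is invariant under internal grading shifts, we may assume $d_0 = 0$ and then handle each of the three types of sequences from Definition~\ref{def:quadric:degseq} in turn.

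For the case $d = (0, \infty, \infty, \ldots)$, simply take $M_d = R$, whose Betti diagram is $\pi_d$ by inspection. For $d = (0, d_1, \infty, \ldots)$, the plan is to choose a homogeneous non-zero-divisor $f \in R$ of degree $d_1$ and set $M_d = R/\langle f \rangle$, so that the minimal free resolution $R \gets R(-d_1)$ is pure of the required shape. The existence of such an $f$ is straightforward: $R = \kk[x,y]/\langle q\rangle$ has depth $1$, and its associated primes are (at most two) principal ideals generated by linear forms; hence their union does not cover $R_1$, and $f := \ell^{d_1}$ works for any linear non-zero-divisor $\ell \in R_1$.

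The case $d = (0, d_1, d_1 + 1, d_1 + 2, \ldots)$ is where the substantive work occurs. The key input is that $R$, being the coordinate ring of a plane quadric, is a Koszul algebra; as a consequence the minimal free resolution of $\kk = R/\fm$ is linear with $\beta^R_{0,0}(\kk) = 1$ and $\beta^R_{i,i}(\kk) = 2$ for all $i \geq 1$, and equivalently the minimal free resolution of $\fm = \langle x, y \rangle$ has the form
\[
\fm \gets R(-1)^2 \gets R(-2)^2 \gets R(-3)^2 \gets \cdots .
\]
Granting this, choose a non-zero-divisor $u \in R_{d_1 - 1}$ (take $u = 1$ when $d_1 = 1$, else $u = \ell^{d_1 - 1}$) and set $I := u\fm = \langle ux, uy\rangle \subseteq R$. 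Multiplication by $u$ defines an isomorphism of graded $R$-modules $\fm(-(d_1 - 1)) \xrightarrow{\sim} I$, so the minimal free resolution of $I$ is that of $\fm$ shifted by $d_1 - 1$; prepending $R$ then yields the minimal free resolution
\[
R \gets R(-d_1)^2 \gets R(-d_1 - 1)^2 \gets R(-d_1 - 2)^2 \gets \cdots
\]
of $M_d := R/I$, whose Betti diagram is exactly $\pi_d$.

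The only nontrivial ingredient is the linearity of the resolution of $\kk$ over the quadric hypersurface $R$. This is classical, but if a self-contained verification is desired one may compute directly that the first syzygies of $\fm$ over $R$ are minimally generated by two linear elements: for a general quadric $q = \alpha x^2 + \beta xy + \gamma y^2$, the syzygy module of $\fm$ inside $R(-1)^2$ is generated by $(\alpha x + \beta y,\ \gamma y)$ and $(y,\ -x)$, and iterating one checks that each subsequent syzygy module is again generated by two linear forms. This inductive calculation, together with the elementary verification that linear non-zero-divisors exist in $R_1$, constitutes the only mild obstacle in the argument.
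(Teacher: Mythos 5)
Your proof is correct, and the third case is handled by a genuinely different argument than the paper's. Cases one and two match the paper essentially exactly (the paper picks $\ell$ a linear nonzerodivisor and sets $M = R(-d_0)/\langle \ell^{d_1-d_0}\rangle$, which is the same construction after your normalization $d_0=0$). For the infinite-projective-dimension case, however, the paper uses the module $R/\langle \ell^{d_1}, x\ell^{d_1-1}\rangle$, computes its minimal $S$-resolution via Hilbert--Burch, runs the Shamash standard resolution construction, and then identifies which entries of the homotopy are units in order to cancel the nonminimal part. You instead take $R/u\fm$ with $u$ a nonzerodivisor of degree $d_1-1$: since multiplication by $u$ gives a graded isomorphism $\fm(-(d_1-1))\cong u\fm$, and since $R$ is Koszul so that $\fm$ has a linear resolution with ranks $2,2,2,\ldots$, the Betti diagram is immediate. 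Your route is shorter and avoids the Shamash machinery entirely. It's worth noting, though, that the paper deliberately introduces the Shamash construction \emph{before} this lemma because it is indispensable a few pages later (in the proof that $\BQ(R)\subseteq F$, where the functionals $\alpha_k$ are interpreted as ranks of homotopy components); so the authors' decision to use it here as well keeps the exposition uniform, whereas your argument is a self-contained alternative that trades that uniformity for a cleaner, more elementary computation in this one lemma. One small remark: the Koszulness of a quadric hypersurface is indeed classical (your direct syzygy computation is one way to see it; the graded Poincaré series $(1+st)^2/(1-s^2t^2)=(1+st)/(1-st)$ from the Tate resolution is another), so invoking it is fine, but since the paper never states this fact, a reader following your proof would need it supplied --- which you do.
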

\begin{proof}
It suffices to show that, for each $R$-degree sequence $d$, there exists an 
$R$-module $M$ with $\beta^R(M)=\pi_d$.  
If $d=(d_0,\infty,\dots)$, we simply choose $M=R(-d_0)$. For the other
cases, fix $\ell$, a linear form not a scalar multiple of $x$, that is a nonzero divisor on $R$ (i.e., $\ell$ does not divide $q$).  
Such an $\ell$ exists in any characteristic.
If $d=(d_0,d_1,\infty,\dots)$, we set $M=R(-d_0)/\<\ell^{d_1-d_0}\>$.

Finally, if $d=(d_0,d_1,d_1+1,d_1+2,\dots)$, we set $M$ to be 
$R(-d_0)/\<\ell^{d_1-d_0},x\ell^{d_1-d_0-1}\>$.  To see that $M$ has
the desired Betti diagram, we first consider the minimal $S$-free
resolution. By hypothesis $q, \ell^{d_1 - d_0}, x \ell^{d_1 - d_0 - 1}$ are
a minimal set of generators in $S$. Applying the Hilbert-Burch theorem, see
e.g.  \cite[20.15]{eis-ca}, the $S$-free resolution of $M$ has the form: \[
\xymatrix{
0 &\ar[l] S(-d_0) & \ar[l]_{\partial_1} {\begin{matrix} S(-d_0-2) \\ \oplus \\ S(-d_1)^2 \end{matrix}} &\ar[l] S(-d_1-1)^2 & \ar[l] 0.
}
\]
where $\partial_1 = \left [ \begin{matrix} q & \ell^{d_1 - d_0} & x
    \ell^{d_1 - d_0 - 1} \end{matrix} \right ]$. 
We fix homotopies $s_1,
s_2$ for multiplication by $q$ on this resolution:
\[
\xymatrix{ 0 & \ar[l] \ar[d]_q \ar[dr]^{s_1} S(-d_0-2) & \ar[l]
  \ar[d]_q \ar[dr]^{s_2}{\begin{matrix} S(-d_0-4) \\ \oplus \\ S(-d_1-2)^2 \end{matrix}} &
  \ar[l] \ar[d]_q S(-d_1-3)^2 & \ar[l] 0 \\
0 &\ar[l] S(-d_0) & \ar[l]_{\partial_1} {\begin{matrix} S(-d_0-2) \\ \oplus \\ S(-d_1)^2 \end{matrix}} &\ar[l] S(-d_1-1)^2 & \ar[l] 0.
}
\]
Since $\ell$ does not divide $q$ we see that the component of $s_1$ that
maps $S(-d_0 - 2)$ to $S(-d_0 -2)$ must be 1. By degree
considerations, the maps $s_1$ and $s_2$ cannot contain any other unit
entries.

The standard resolution of $M$ is now given by 
\[
\xymatrix{
0 &\ar[l] R(-d_0) & \ar[l] {\begin{matrix} R(-d_0-2) \\ \oplus \\ R(-d_1)^2 \end{matrix}} 
&\ar[l] {\begin{matrix}R(-d_1-1)^2 \\ \oplus \\ R(-d_0 - 2) \end{matrix}} 
& \ar[l] {\begin{matrix} R(-d_0-4) \\ \oplus \\ R(-d_1-2)^2 \end{matrix}}
& \ar[l] \cdots.
}
\]
The maps $R(-d_0 - 2n) \leftarrow R(-d_0-2n)$ 
are the only nonminimal part of this resolution.  It follows that 
$M$ has a minimal free $R$-resolution of the form \[
\xymatrix{
0 & \ar[l] R(-d_0) & \ar[l] R(-d_1)^2 & \ar[l] R(-d_1 - 1)^2 & \ar[l] R(-d_1 - 2)^2 & \ar[l] \cdots
,}
\]
which yields the desired Betti diagram.
\end{proof}

\begin{lemma}\label{lem:BQ(R)F}
There is an inclusion $\BQ(R) \subseteq F$.
\end{lemma}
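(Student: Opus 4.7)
The plan is to verify that $\beta^R(M) \in F$ for every finitely generated graded $R$-module $M$ by checking each family (a)--(d) of defining inequalities in turn.  Family (a) is immediate from nonnegativity of Betti numbers.  The main tool for the other three families is the Shamash construction just described: starting from a minimal $S$-free resolution $G_\bullet\colon 0 \to G_2 \to G_1 \to G_0 \to M \to 0$ (of length at most $2$, since $S = \kk[x,y]$) together with the homotopies $s_1, s_2$ for multiplication by $q$, I would form the potentially non-minimal $R$-resolution shown in the diagram and then minimize by cancelling pairs of generators linked by unit entries of $\bar s_1$ and $\bar s_2$.

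For family (d), I would show that $\beta^R_{i,j}(M) = \beta^R_{i+1,\,j+1}(M)$ for every $i \geq 2$ by appealing to the matrix-factorization structure of the tail of the resolution.  For $i \geq 2$, the syzygy $\Omega^i_R(M)$ is maximal Cohen--Macaulay, and its minimal $R$-resolution is periodic with period given by a reduced matrix factorization $(\phi,\psi)$ of $q$ with $F_0 = \bigoplus S(-a_k)$ and $F_1 = \bigoplus S(-b_k)$.  A degree-counting argument based on $\det\phi \cdot \det\psi = q^n$, combined with the reducedness requirement that every nonzero entry of $\phi$ and $\psi$ lies in $\fm$, should force the multisets $\{b_k\}$ and $\{a_k + 1\}$ to coincide term-by-term.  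This upgrades the intrinsic $2$-periodicity of the Shamash construction (with degree shift $2$) to the sharper $1$-periodicity (with degree shift $1$) required by (d).

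For families (b) and (c), I would derive explicit formulas expressing the low-degree minimal Betti numbers $\beta^R_{1,j}$ and $\beta^R_{2,j}$ in terms of the $S$-Betti numbers $\beta^S_{i,j}(M)$ corrected by degree-by-degree counts of the unit entries in $\bar s_1$ and $\bar s_2$ (noting that $\beta^R_{0,j} = \beta^S_{0,j}$ trivially).  Inequality (b), namely $\alpha_k(\beta^R(M)) = \beta^R_{1,k} - \beta^R_{2,k+1} \geq 0$, should reduce to a direct comparison of these cancellation counts, using the homotopy identity $\partial_1 s_1 = q\cdot\id_{G_0}$ together with the Hilbert--Burch structure of minimal $S$-resolutions over $\kk[x,y]$.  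For inequality (c), I would interpret the coefficients $(2, -2, 1)$ in $\gamma_k$ as reflecting the expansion of $\mathrm{HS}_R(t) = (1-t^2)/(1-t)^2$, and aim to rewrite the cumulative sum $\gamma_k(\beta^R(M))$ as a truncated dimension of a naturally defined graded $\kk$-vector space attached to $M$ and its $S$-resolution, so that nonnegativity becomes manifest.

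The main obstacle will be family (c).  While (a), (b), and (d) reduce to essentially local comparisons between adjacent positions of the resolution, the functional $\gamma_k$ is cumulative, and one must carefully track how cancellations in the Shamash minimization cascade across positions without disturbing the accounting.  The hardest part, and almost certainly where the ``new ideas'' advertised in the introduction will be required, is pinning down a clean homological or Hilbert-function interpretation of $\gamma_k(\beta^R(M))$ as a nonnegative integer and then verifying compatibility of that interpretation with the minimization procedure in every degree pattern, including degenerate ones where several unit entries of $\bar s_1$ and $\bar s_2$ interact at the same degree.
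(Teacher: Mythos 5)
There is a genuine gap in family (c), which you rightly flag as the crux. Your plan to interpret $\gamma_k(\beta^R(M))$ as the dimension of a graded vector space attached to $M$ and its $S$-resolution points in the right direction---the coefficients $(2,-2,1)$ do reflect the Hilbert series of $R$---but it misses the essential maneuver. One cannot read $\gamma_k(\beta^R(M))$ off as a Hilbert function of $M$ directly, because generators of $M$ in degrees above $k$ and first syzygies in degrees above $k+1$ contribute to $h_{k+2}(M)$ while being invisible to the truncated sum $\gamma_k$. The paper's fix is to \emph{replace $M$ by a truncated module}: take the minimal $R$-presentation $\phi_1\colon F_1\to F_0$, restrict to the summands $F_0' := \bigoplus_{j\leq k} R(-j)^{\beta^R_{0,j}(M)}$ and $F_1' := \bigoplus_{j\leq k+1} R(-j)^{\beta^R_{1,j}(M)}$, and set $N := \coker(\phi_1'\colon F_1'\to F_0')$. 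A Snake-lemma argument shows that $N$ has the same Betti numbers as $M$ in the relevant range, including $\beta^R_{2,j}(N) = \beta^R_{2,j}(M)$ for $j\leq k+2$, so $\gamma_k(\beta^R(M)) = \gamma_k(\beta^R(N))$; a Hilbert-series computation for $N$, using the periodicity from (d) to cancel the tail of the resolution, then yields $\gamma_k(\beta^R(N)) = h_{k+2}(N) \geq 0$. This truncation is the ``new idea'' you anticipated, and your proposed vector space (attached to $M$ and its $S$-resolution rather than to a truncated $R$-module) does not obviously produce the needed identity.

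Your route for family (d) is genuinely different from the paper's and, with the details filled in, it works. The paper extends scalars to an algebraic closure, normalizes $q$ to $x^2$ or $xy$, and appeals to Yoshino's classification of matrix factorizations; you instead propose a degree count on a reduced graded matrix factorization $(\phi,\psi)$ of $q$. To complete it: with $F_0 = \bigoplus S(-a_k)$ and $F_1 = \bigoplus S(-b_l)$ of size $r$, reducedness gives $b_l \geq a_k + 1$ whenever $\phi_{kl} \neq 0$ and $b_l \leq a_k + 1$ whenever $\psi_{lk} \neq 0$; since $\det\phi$ and $\det\psi$ are nonzero, permutations realize these inequalities in every row, and summing squeezes $\sum_l b_l - \sum_k a_k$ from above and below to $r$, forcing the multiset equality $\{b_l\} = \{a_k + 1\}$. (The identity $\det\phi\cdot\det\psi = \pm q^r$ is needed only to ensure both determinants are nonzero; it is not itself the source of the degree constraint, contrary to how your sketch phrases it.) This is more elementary than the paper's appeal to a classification and avoids the field extension. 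Your sketch of (b) matches the paper's strategy, which shows $\alpha_k(\beta^R(M)) = \rank \sigma_{2,k}$ via $\beta^R_{1,k} - \beta^R_{3,k+2} = \rank \sigma_{2,k}$ and the periodicity $\beta^R_{2,k+1} = \beta^R_{3,k+2}$.
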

\begin{proof}
  Fix a finitely generated graded $R$-module $M$. 
  We must show that the inequalities defining $F$ are
  nonnegative on $\beta^R(M)$. 
  Certainly $\e_{i,j}(\beta^R(M))=\beta^R_{i,j}(M)\geq 0$ 
  for all $i$ and $j$, completing case~\eqref{eps}.  
  For case~\eqref{periodic2}, the
  minimal resolution of $M$ is given by a matrix factorization after
  at most two steps by~\cite[Theorem~4.1]{eisenbud-ci}.  
  By~\cite[Lemma~20.11]{eis-ca}, 
  $\Omega^R_2(M)$ has depth $2$ and is thus maximal Cohen--Macaulay. 
  After extending the base field to its algebraic closure (which does not affect Betti diagrams),   the homogeneous quadric $q$ is, without loss of generality,
  either $x^2$ or $xy$. The matrix factorizations of these quadrics
  over an algebraically closed field
  are classified (see \cite[Example 6.5 and p. 76]{Yoshino}). 
  Thus the resolution of $M$
  after at most 2 steps is given by one of the matrix factorizations
  above;  one easily checks for these that~\eqref{periodic2} hold. 

For case~\eqref{alphone}, we show that $\alpha_{k}\left(\beta^R(M)\right) \geq 0$ by showing that it measures the rank of a map.  Fix a minimal free $S$-resolution $G_\bullet$ of $M$ as above, and let $s_1$ and $s_2$ denote the homotopies occurring in the standard resolution of $M$ over $R$.  Let $\sigma_{i,j}$ be
  the composition of the maps
\[ 
\sigma_{i,j}\colon S(-j)^{\beta^S_{i - 1, j - 2}(M)} \into G_{i-1}(-2) \xra{s_i} G_i
  \onto S(-j)^{\beta_{i,j}^S(M)}. 
\]
With the chosen basis, the entries of $\sigma_{i,j}$ have degree 0, so $\sigma_{i,j}$ 
is a matrix of elements of $\kk$. We claim that
\[
\alpha_{k}(\beta^R(M)):= \beta^R_{1,k}(M)  -\beta^R_{2, k+1}(M) = \rank \sigma_{2,k}\geq 0.
\]

It follows from this construction that 
\begin{align*} 
\beta^R_{1,k}(M) &= \beta^S_{1,k}(M) - \rank \sigma_{1,k} \\
 \text{and}\quad \beta^R_{3,k+2}(M) &= \beta^S_{1,k}(M) - \rank \sigma_{1,k} - \rank \sigma_{2,k}. 
\end{align*}
Thus $\beta^R_{1,k}(M) - \beta^R_{3,k+2}(M) = \rank \sigma_{2,k}$. 
As noted above in the proof of ~\eqref{periodic2}, $\beta^R_{2,k+1}(M) = \beta^R_{3,k+2}(M)$, which yields the claim. 

Finally, for case~\eqref{gam}, or $\gamma_k$, 
let $\phi_1\colon F_1\to F_0$ be a minimal presentation of $M$ over $R$ and set 
\[
  F_0':=\bigoplus_{j\leq k} R(-j)^{\beta^R_{0,j}(M)} 
  \qquad \text{and}\qquad 
  F_1':=\bigoplus_{j\leq k+1} R(-j)^{\beta^R_{1,j}(M)}.
\]
There are natural split inclusions $F_0'\subseteq F_0$ and
$F_1'\subseteq F_1$.  In particular, $\phi_1$ induces a map $\phi_1'\colon
F_1'\to F_0'$. We set $N:=\coker(\phi_1')$, and note that $\phi_1'$ is a
minimal presentation of $N'$. 
As such, $\beta^R_{0,j}(M)=\beta^R_{0,j}(N)$ for all $j\leq k$, and
$\beta^R_{1,j}(M)=\beta^R_{1,j}(N)$ for all $j\leq k+1$.
In addition, we claim that $\beta^R_{2,j}(M)=\beta^R_{2,j}(N)$ for all $j\leq k+2$. 
To see this, consider the diagram
\[ 
\xymatrix{& & 0 \ar[d] & 0 \ar[d] & & \\ 
0 \ar[r] & \Omega_2^R(N) \ar[d] \ar[r] & F_1' \ar[d]
  \ar[r] & F_0' \ar[d]
  \ar[r] & N \ar[r] \ar[d] & 0\\
0 \ar[r] & \Omega_2^R(M) \ar[r] & F_1 \ar[r] & F_0
  \ar[r] & M \ar[r] & 0, }
\] 
where we view $\Omega_2^R(N)$, $\Omega_2^R(M)$ as submodules
 of $F_1', F_1$ respectively. By the Snake Lemma, $\Omega_2^R(N)$ is 
 a submodule of $\Omega_2^R(M)$. For a fixed  basis of $F_1$, any 
 element of $\Omega_2^R(M)$ may be
 written as a linear combination of the basis elements with
 coefficients in $R_{\geq 1}$. Thus for an element $x \in
 \Omega_2^R(M)$ of degree $j$ with $j \leq k +2$, we see that the
 basis elements whose corresponding coefficients are nonzero in a
 decomposition of $x$ have degree at most $j - 1 \leq k + 1$. 
 In particular, these basis elements are in $F_1'$, and hence  
 $\Omega_2^R(N)_j = \Omega_2^R(M)_j$ for all $j \leq k+2$, which implies the claim.

By the definition
of $\gamma_k$, we have now shown that
$\gamma_k\left(\beta^R(M)\right)=\gamma_k\left(\beta^R(N)\right)$. 
It thus suffices to show that $\gamma_k\left(\beta^R(N)\right)\geq 0$.  
We achieve this by showing that $\gamma_k\left(\beta^R(N)\right)=h_{k+2}(N)$, 
where $h_{k+2}(N)$ denotes the Hilbert function of $N$ in degree $k+2$.  

The Hilbert function of $N$ can be computed entirely in terms of
$\beta^R(N)$:
\begin{align*}
h_{k+2}(N) &=  \sum_{j\in \ZZ} \sum_{i=0}^\infty (-1)^i\beta^R_{i,j}(N) h_{k+2}(R(-j))\\
&= \sum_{j\in \ZZ} \sum_{i=0}^\infty (-1)^i\beta^R_{i,j}(N) h_{k+2-j}(R)\\
&=\sum_{\ell\in \ZZ} \sum_{i=0}^\infty (-1)^i\beta^R_{i,i+\ell}(N)h_{k+2-i-\ell}(R).
\intertext{Since $\beta^R_{0,j}(N)=0$ for $j>k$, $\beta^R_{1,j}(N)=0$ for $j>k+1$, 
and $h_i(R)=2$ for all $i>0$, we have that}
h_{k+2}(N) &=\sum_{\ell\leq k} \sum_{i=0}^\infty (-1)^i\beta^R_{i,i+\ell}(N)h_{k+2-i-\ell}(R)\\
&=\sum_{\ell\leq k} \left( \sum_{i=0}^{k+1-\ell} (-1)^i\beta^R_{i,i+\ell}(N)\cdot 2\right) + (-1)^
{k+2-\ell}\beta^R_{k+2-\ell,k+2}(N)\cdot 1.\\
\intertext{By applying ~\eqref{periodic2} twice, we see that
$\beta^R_{i,j}(N) = \beta^R_{i+2,j+2}(N)$ for all $i \geq 2$. 
Using this to cancel, we obtain}
h_{k+2}(N) &= \sum_{\ell \leq k} \left(2\beta^R_{0,\ell}(N)-2\beta^R_{1,\ell+1}(N)
+\beta^R_{2,\ell+2}(N)\right) \ = \gamma_k(\beta^R(N)).\qedhere
\end{align*}
\end{proof}

For the final inclusion in the proof of Theorem~\ref{thm:main}, 
we compare the cone $D$ (which is defined in terms of extremal rays) 
and the cone $F$ (which is defined in terms of halfspaces).  
As we see in Lemma~\ref{lem:boundaryD}, 
it is easier to move between these two descriptions in the case of a simplicial fan, 
so we first construct a simplicial fan $\Sigma$ whose support is contained in $D$.

\begin{lemma}\label{lem:simplicial}
For every finite chain $\C$ of $R$-degree sequences, the cone 
$\pos(\C):=\QQ_{\geq 0}\{\pi_d \mid d\in C\}$ is simplicial. 
The collection of these simplicial cones forms a simplicial fan.
\end{lemma}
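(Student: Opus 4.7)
The plan is to prove Lemma~\ref{lem:simplicial} in two stages: first, I will show that for every chain $\C = \{d^{(1)} < d^{(2)} < \cdots < d^{(n)}\}$ the pure diagrams $\pi_{d^{(1)}}, \ldots, \pi_{d^{(n)}}$ are linearly independent, so that $\pos(\C)$ is simplicial of dimension $n$; then I will verify that the resulting collection of cones is closed under faces and that pairwise intersections are common faces, giving the simplicial fan structure.

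For the linear independence, the plan is to associate to each chain element $d^{(i)}$ a linear functional $L_i$ drawn from the halfspaces defining the cone $F$ in Theorem~\ref{thm:main} — namely $\e_{i,j}$, $\alpha_k$, and $\gamma_k$ — so that the pairing matrix $[L_i(\pi_{d^{(j)}})]$ is triangular with nonzero diagonal. The natural candidates are: $\e_{0,d_0}$ for Case~1 sequences $(d_0, \infty, \ldots)$; $\alpha_{d_1}$ for Case~2 sequences $(d_0, d_1, \infty, \ldots)$, which one checks directly takes value $1$ on its own $\pi_d$ and vanishes on all Case~1 and Case~3 diagrams; and $\e_{2, d_1+1}$ for Case~3 sequences $(d_0, d_1, d_1+1, \ldots)$, which takes value $2$ on its own $\pi_d$ while vanishing on all Case~1 and Case~2 diagrams. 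Triangularity would then follow from the constraints the partial order places on chains: $(d^{(i)}_0, d^{(i)}_1)$ must be non-decreasing in $i$, and the tie-breaker forces Case~3 to precede Case~2 whenever two chain elements share the same $(d_0, d_1)$-pair.

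For the fan property, I would verify that (a) each face of $\pos(\C)$ has the form $\pos(\C')$ for a sub-chain $\C' \subseteq \C$, which is immediate from simpliciality, and (b) for any two chains $\C_1, \C_2$ one has $\pos(\C_1) \cap \pos(\C_2) = \pos(\C_1 \cap \C_2)$. For (b), the plan is to take $v \in \pos(\C_1) \cap \pos(\C_2)$ with nonnegative representations $\sum_{d \in \C_1} a_d\,\pi_d = v = \sum_{d \in \C_2} b_d\,\pi_d$ and apply the distinguishing functionals $L_d$ from the first step, together with the $\gamma_k$'s (which compute truncated Hilbert-function data, and so behave monotonically with respect to the partial order on degree sequences), to force $a_d = 0$ for $d \in \C_1 \smallsetminus \C_2$ and $b_d = 0$ for $d \in \C_2 \smallsetminus \C_1$ in sequence. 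This situates the argument inside the general framework of~\cite{BEKS-poset} for poset-indexed cones of Betti diagrams.

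The main obstacle will be the triangularity verification in the first step. The delicate scenario is when a chain contains several Case~2 or Case~3 elements sharing the same $d_1$ but differing in $d_0$; in that situation, the single functionals $\alpha_{d_1}$ and $\e_{2,d_1+1}$ no longer separate the chain elements, so the pivoting scheme must be refined. I expect the fix is a two-level triangularization: first group chain elements by their $d_1$-value (where $\alpha_{d_1}$ and $\e_{2,d_1+1}$ do the work across different $d_1$-blocks), and then within each $d_1$-block, use the secondary functionals $\e_{0,d_0}$ — together with the $\gamma_k$ functionals, which on each $\pi_d$ see the $d_0$-location via their Hilbert-function interpretation — to resolve the remaining ambiguity, exploiting the strict increase of $d_0$ among comparable chain elements with fixed $d_1$.
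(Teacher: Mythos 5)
Your overall plan matches the paper's: establish linear independence of $\{\pi_d \mid d \in \C\}$ by a triangularity argument, then handle the fan condition by showing pairwise intersections are common faces (the paper notes that each $\pi_d$ has a nonzero entry in a position where $\pi_{d'}$ vanishes for every $d' > d$ in $\C$, and then invokes~\cite[2.9]{boij-sod1}). You also correctly flag the delicate case: a chain containing several Case~2 or Case~3 sequences with the same $d_1$. The gap is in the proposed two-level fix. First, the $\gamma_k$ cannot resolve the ambiguity you want them to: a direct check gives
\[
\gamma_k\bigl(\pi_{(d_0,d_1,d_1+1,\dots)}\bigr)
\;=\;
\gamma_k\bigl(\pi_{(d_0,d_1,\infty,\dots)}\bigr)
\;=\;
2\cdot\mathbf{1}_{k\geq d_0}-2\cdot\mathbf{1}_{k\geq d_1-1}
\]
for every $k$, which is forced because $\gamma_k$ records truncated Hilbert-function data and the modules realizing these two diagrams (as in Lemma~\ref{lem:DBQ(R)}) have the same Hilbert function. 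Second, the claim that $d_0$ strictly increases among comparable chain elements with fixed $d_1$ is false: $(d_0,d_1,d_1+1,\dots) < (d_0,d_1,\infty,\dots)$ holds with $d_0$ and $d_1$ both fixed, and nothing stops this pair from occurring in a chain.

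The correct fix is simpler than a two-level scheme built from $\alpha$ and $\gamma$: use a single coordinate functional $\e_{i,j}$ per chain element, with $(i,j)$ depending on the chain. Take $d \in \C$ of Case~3 form $(d_0,d_1,d_1+1,\dots)$. If some $e>d$ in $\C$ is also Case~3 with $e_1 = d_1$ (hence $e_0 > d_0$), then comparability with $e$ forces every $e'>d$ in $\C$ to satisfy $e'_0 > d_0$, so $\e_{0,d_0}$ works. Otherwise every $e>d$ in $\C$ has $(\pi_e)_{2,d_1+1}=0$ (Case~1 and Case~2 diagrams vanish there, as does any Case~3 diagram with $e_1\neq d_1$), and $\e_{2,d_1+1}$ works. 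The Case~2 analysis is parallel with $\e_{1,d_1}$ in place of $\e_{2,d_1+1}$, and Case~1 always uses $\e_{0,d_0}$. This triangularity in raw coordinates is exactly what the paper's one-sentence observation asserts; it needs neither $\alpha_k$ nor $\gamma_k$.
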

\begin{proof}
The diagrams $\pi_d$ from any finite chain $\C$ are
linearly independent. This follows from the fact that for any degree sequence
$d$, $\pi_d$ has a nonzero entry in a position such
that, for every degree sequence $d'$ in the chain $C$ with $d < d'$, $\pi_{d'}$ has a zero in the corresponding position.

For the second statement, we need to show that these cones meet along faces.  
Using the observation above, the proof of \cite[2.9]{boij-sod1}
applies directly to our situation.
\end{proof}

\begin{lemma}\label{lem:FD}
There is an inclusion $F \subseteq D$.
\end{lemma}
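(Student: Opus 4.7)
The strategy mirrors Eisenbud--Schreyer's greedy decomposition algorithm. Given $v\in F$ with $v\neq 0$, I will iteratively subtract nonnegative multiples of pure diagrams $\pi_d$ along a chain of $R$-degree sequences, stopping when the residue is zero. By Lemma~\ref{lem:simplicial}, this places $v$ in the support of the simplicial fan $\Sigma$, hence in $D$.

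First I define the \emph{leading $R$-degree sequence} $d(v)$ of a nonzero $v\in F$ as follows. Set $d_0 := \min\{j : v_{0,j} > 0\}$, which is a finite integer by $\e_{0,j}(v)\geq 0$ and column-finiteness. If column $1$ of $v$ vanishes, then the inequalities $\alpha_k(v)\geq 0$ together with the periodicity relations~\eqref{periodic2} force all higher columns to vanish as well, and we take $d(v)=(d_0,\infty,\ldots)$ of type (i). Otherwise set $d_1 := \min\{j : v_{1,j} > 0\}$. If column $2$ of $v$ vanishes, we take $d(v)=(d_0,d_1,\infty,\ldots)$ of type (ii). If column $2$ is nonzero, periodicity forces $v_{i,j} = v_{2,j-i+2}$ for all $i\geq 2$, and combined with the $\alpha$-inequalities the support of the tail lies where $j\geq d_1+i-1$; we then assign type (iii) with $d_i = d_1+i-1$.

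Next I choose the largest $c\geq 0$ such that $v':= v - c\pi_{d(v)}$ still lies in $F$. This maximum is attained because $F$ is closed, the admissible set contains $0$, and the constraint $c\leq v_{0,d_0}$ keeps it bounded. By construction $v'\in F$, and maximality of $c$ implies that at least one defining inequality of $F$ becomes an equality at $v'$. A case analysis on which functional tightens shows that $d(v') > d(v)$ in the partial order of Definition~\ref{def:quadric:degseq}: if $\e_{0,d_0}$ tightens then $d_0$ strictly increases; if $\e_{1,d_1}$ or an $\alpha_k$ or $\gamma_k$ tightens then either $d_1$ strictly increases or the type is truncated (from (iii) toward (ii) or (i)), which in the partial order constitutes an increase among sequences sharing the same first two entries.

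Iterating produces $v = v^{(0)}, v^{(1)}, \ldots$ with strictly increasing leading degree sequences under $\leq$. For termination, observe that $v$ has column-finite support in columns $0$ and $1$, while columns $i\geq 2$ are determined by column $2$ via periodicity; hence only finitely many values of $(d_0,d_1)$ and types can arise in any chain extracted from $v$, so some $v^{(N)}=0$. This yields $v = \sum_{k=0}^{N-1} c_k \pi_{d^{(k)}}$ as a nonnegative combination along a chain, and this chain generates a simplicial cone of $\Sigma$ by Lemma~\ref{lem:simplicial}, so $v\in D$. The main obstacle is the case analysis at each subtraction: verifying that $\gamma_k\geq 0$ and $\alpha_k\geq 0$ are preserved and that their tightening correctly advances $d(v)$, since these functionals lack the direct termwise compatibility available in the polynomial-ring setting of~\cite{eis-schrey1}.
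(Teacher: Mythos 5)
Your strategy (greedy iterative decomposition, Eisenbud--Schreyer style) is genuinely different from the paper's. The paper never runs a decomposition algorithm to prove $F\subseteq D$; instead it projects to finite-dimensional slices $\overline{\VV}_m$, shows $\overline{\Sigma}_m$ is full-dimensional and equidimensional, classifies all boundary facets via submaximal chains of degree sequences (the eight types \eqref{type1}--\eqref{type01}), checks that every boundary halfspace of $\overline{\Sigma}_m$ already appears among the defining inequalities of $\overline{F}_m$, and concludes via the convexity observation of Lemma~\ref{lem:boundaryD2}. The paper's route is ``dual'': you never touch an individual element of $F$, you compare halfspace descriptions.

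Your approach, however, has a concrete error in the definition of the leading degree sequence and an unverified crux. First, assigning type (iii) whenever column $2$ of $v$ is nonzero is incorrect: a type (iii) sequence $(d_0,d_1,d_1+1,d_1+2,\ldots)$ with $d_1 = \min\{j : v_{1,j}>0\}$ requires $v_{2,d_1+1}>0$, not merely that column $2$ is nonzero somewhere. For example, take $v = \pi_{(0,1,\infty,\ldots)} + \pi_{(0,2,3,4,\ldots)}$: then $d_0=0$, $d_1=1$, but $v_{2,2}=0$ while $v_{2,3}=2$. Your recipe would attempt to subtract $c\,\pi_{(0,1,2,3,\ldots)}$, which immediately violates $\e_{2,2}\geq 0$. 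The correct minimal compatible degree sequence here is $(0,1,\infty,\ldots)$ of type (ii). You must distinguish the case $v_{2,d_1+1}>0$ (type (iii)) from $v_{2,d_1+1}=0$ but column $2$ nonzero (type (ii)).

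Second, the heart of the argument --- ``a case analysis on which functional tightens shows that $d(v') > d(v)$'' --- is precisely the hard part and is left entirely unverified; you flag it yourself as ``the main obstacle.'' In the polynomial-ring setting the analogous step is the main technical content of the decomposition theorem, and here the functionals $\alpha_k$ and especially $\gamma_k$ (a Hilbert-function type inequality spread over many columns) behave quite differently from the Eisenbud--Schreyer cohomology functionals; it is not at all clear that the degree sequence of the residue strictly increases in the partial order of Definition~\ref{def:quadric:degseq} when, say, $\gamma_k$ tightens. Without this, the iteration is not shown to land inside a single simplicial cone of $\Sigma$, and the termination argument collapses. In short: the proposed route is plausible but would require proving the strict-advance property from scratch, and as written it also mis-assigns the leading pure diagram.
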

\begin{proof}
Let $\Sigma$ be the simplicial fan constructed in Lemma~\ref{lem:simplicial}, 
and let $\supp(\Sigma)$ denote its support, as defined in Appendix~\ref{sec:notation}. 
By construction, $\supp(\Sigma) \subseteq D$, so it suffices to prove that $F \subseteq \supp(\Sigma)$.\footnote{A priori, $\supp(\Sigma)$ is a (not necessarily convex) subcone of $D$; the proof of Theorem~\ref{thm:main artinian} implies that $\supp(\Sigma)=D$.}
Now, we have a simplicial fan $\Sigma$ defined in terms of extremal rays, 
and we seek to determine its boundary halfspaces, as defined in
Appendix~\ref{sec:notation}. Then to prove the Lemma it will be
enough to show that each of the boundary halfspaces of $\Sigma$ is
contained in the list of halfspaces defining $F$.

In order to apply Lemma~\ref{lem:boundaryD}, we first reduce to the case of a 
full-dimensional, equidimensional simplicial fan in a finite dimensional vector space. For each $m \in \ZZ_{\geq 0}$, define the subspace $\VV_m$ of $\VV$ to be 
\[
\VV_m := \{ v \in \VV \mid v_{i,j} = 0 \text{ unless } -m+i \leq j \leq m+i\}.
\]
Note that $\VV_m$ contains the Betti diagram of any module with generators in degrees at least $-m$ and with regularity at most $m$.

Set $\Sigma_m:=\Sigma\cap \VV_m$ and $F_m := F \cap \VV_m$. 
Observe that 
$\Sigma_m = \{ \pos(\C) \mid {\C \text{ is a chain in } \P_m} \}$, 
where $\P_m := \{ \text{degree sequences } d \mid \pi_d \in \VV_m \}$, 
so by Lemma \ref{lem:simplicial}, $\Sigma_m$ is a simplicial fan.
Since $\VV = \bigcup_{m \geq 0} \VV_m$, it is enough to show that
$F_m \subseteq \supp(\Sigma_m)$ for all $m \geq 0$.

Next, we define the finite dimensional vector space 
\[
\overline{\VV}_m := \{ v \in \VV_m \mid v_{i,j} = 0 \text{ unless } i \leq 2\}, 
\]
and consider the projection $\Phi_m\colon \VV_m \to \overline{\VV}_m$. 
Since every pure diagram $\pi_d$ satisfies the functional of 
type~\eqref{periodic2} in the definition of $F$,
it follows that $\Phi_m$ induces an isomorphism of $\Sigma_m$
onto its image, which we denote by $\overline{\Sigma}_m$. There is
also an isomorphism of $F_m$ onto its image $\overline{F}_m$, since
the defining halfspaces of $F_m$ contain \eqref{periodic2}.
It thus suffices to show that $\overline{F}_m \subseteq \supp(\overline{\Sigma}_m)$ for all $m$.  

We claim that $\overline{\Sigma}_m$ is 
$(\dim \overline{\VV}_m)$-equidimensional.
Every maximal chain of degree sequences in $\P_m$ begins with 
$(-m,-m+1,m+n,\dots)$ and ends with $(m,\infty,\infty,\infty,\dots)$.  
For a fixed maximal chain $\C$, there is a unique $m' \leq m$ 
such that $\C$ is 
\[
(-m,-m+1,-m+n,\dots)<\dots< (m',m+1,m'+n,\dots)<(m',\infty,\infty,\dots)<\dots <(m,\infty,\infty,\dots).
\] 
From this observation, it follows that
\begin{align*} 
|\C| &= \left( (m+m)'+2(2m+1)\right) + \left( m-m'+1\right)=6m+3.
\end{align*}
Since the set $\{\pi_d\}$ is linearly independent for $d \in \C$ by
Lemma~\ref{lem:simplicial}, these diagrams form a basis
of $\overline{\VV}_m$.  It follows that $\overline{\Sigma}_m$ is a $(\dim \overline{\VV}_m)$-equidimensional
simplicial fan.

We now record a collection of supporting halfspaces which define $\overline{F}_m$: 
\begin{enumerate} 
\item\label{eps'}
	$\{\e_{i,j} \geq 0\}$ for all $i\geq 0$, $j\in \ZZ \cap [-m+i, m+i]$; 
\item\label{alphaone'} 
	$\{\alpha_{k}\geq 0\}$ for all $k\in \ZZ \cap [-m,m]$;  
\item\label{gam'}
	$\{\gamma_{k,m} 
	\geq 0\}$ for all $k\in \ZZ \cap [-m,m]$, where for $k \in \ZZ \cap [-m,m]$ we set 
	\[
	\gamma_{k,m} := \sum_{j = -m}^k \left(2\e_{0,j} -2\e_{1,j+1}+\e_{2,j+2} \right). 
	\]
\end{enumerate}
To complete the proof, we show that 
each boundary halfspace of $\overline{\Sigma}_m$
corresponds to a supporting halfspace of $\overline{F}_m.$
By Lemma~\ref{lem:boundaryD}, each boundary halfspace of 
$\overline{\Sigma}_m$ is determined by (at least one) boundary facet, 
and hence is determined by some
submaximal chain in the poset $\P_m$
that is uniquely extended to a maximal chain. 
The proof of \cite[Proposition~2.12]{boij-sod1} applies in our context, 
showing that each boundary halfspace of $\overline{\Sigma}_m$ depends on 
only a small part of any submaximal chain to which it corresponds.  
Namely, such a halfspace is determined by the unique $R$-degree sequence 
$d$ that extends a corresponding chain to a maximal one, along with its two 
neighbors $d' < d''$ in this extended chain, if they exist. 
We write this data as $\cdots <d'<d{\widehat{\,\,\,}}<d''<\cdots$.
By direct inspection of $\P_m$ (see Figure \ref{fig:bounded poset} for the case $m = 1$), 
the submaximal chains that can be uniquely extended are of the following forms:
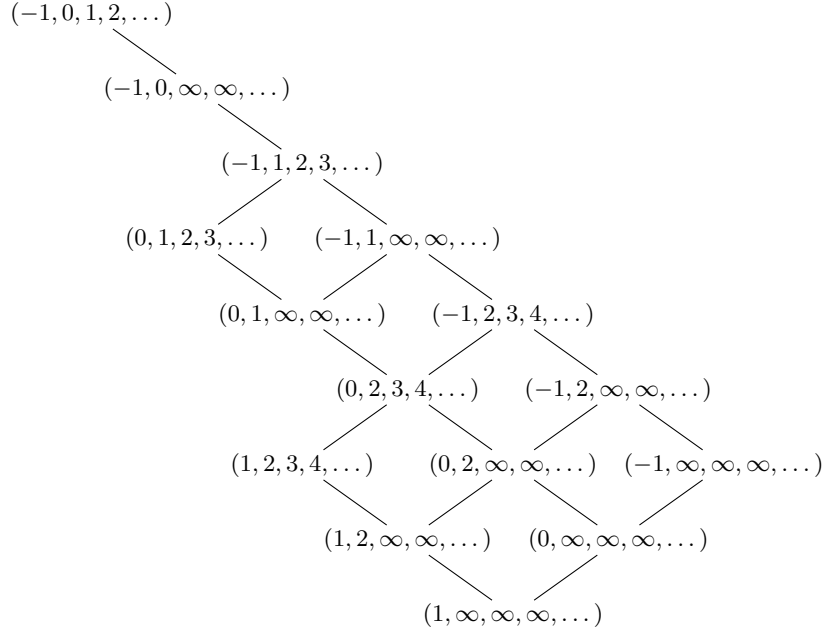
\begin{figure}
\begin{footnotesize}
\begin{tikzpicture}[xscale=1.4,yscale=1.0]
\draw(0,9) node {$(-1,0,1,2,\dots)$};
\draw(1,8) node {$(-1,0,\infty,\infty,\dots)$};
\draw(2,7) node {$(-1,1,2,3,\dots)$};
\draw(3,6) node {$(-1,1,\infty,\infty,\dots)$};
\draw(4,5) node {$(-1,2,3,4,\dots)$};
\draw(5,4) node {$(-1,2,\infty,\infty,\dots)$};
\draw(6,3) node {$(-1,\infty,\infty,\infty,\dots)$};

\draw(1,6) node {$(0,1,2,3,\dots)$};
\draw(2,5) node {$(0,1,\infty,\infty,\dots)$};
\draw(3,4) node {$(0,2,3,4,\dots)$};
\draw(4,3) node {$(0,2,\infty,\infty,\dots)$};
\draw(5,2) node {$(0,\infty,\infty,\infty,\dots)$};

\draw(2,3) node {$(1,2,3,4,\dots)$};
\draw(3,2) node {$(1,2,\infty,\infty,\dots)$};
\draw(4,1) node {$(1,\infty,\infty,\infty,\dots)$};

\draw[-] (0.2,8.8)--(0.8,8.2);
\draw[-] (1.2,7.8)--(1.8,7.2);
\draw[-] (2.2,6.8)--(2.8,6.2);
\draw[-] (3.2,5.8)--(3.8,5.2);
\draw[-] (4.2,4.8)--(4.8,4.2);
\draw[-] (5.2,3.8)--(5.8,3.2);

\draw[-] (1.2,5.8)--(1.8,5.2);
\draw[-] (2.2,4.8)--(2.8,4.2);
\draw[-] (3.2,3.8)--(3.8,3.2);
\draw[-] (4.2,2.8)--(4.8,2.2);

\draw[-] (2.2,2.8)--(2.8,2.2);
\draw[-] (3.2,1.8)--(3.8,1.2);

\draw[-] (1.8,6.8)--(1.2,6.2);
\draw[-] (2.8,5.8)--(2.2,5.2);
\draw[-] (3.8,4.8)--(3.2,4.2);
\draw[-] (4.8,3.8)--(4.2,3.2);
\draw[-] (5.8,2.8)--(5.2,2.2);

\draw[-] (2.8,3.8)--(2.2,3.2);
\draw[-] (3.8,2.8)--(3.2,2.2);
\draw[-] (4.8,1.8)--(4.2,1.2);
\end{tikzpicture}
\end{footnotesize}
\caption{The poset of degree sequences whose Betti diagrams
  lie in ${\VV}_1$.}
\label{fig:bounded poset}
\end{figure}
\begin{enumerate}[\upshape(a)]
	\item\label{type1} 
		$\cdots <d'<d{\widehat{\,\,\,}}<d''<\cdots$, where 
		$d'$ and $d''$ have projective dimension $1$ and either 
		$d''_0-d'_0=1$ or $d''_1-d'_1=1$  (but not both).  
		For instance, 
	\[
	\cdots < (0,1,\infty,\infty, \dots)<(0,2,3,4,\dots){\widehat{\,\,\,}}<
	(0,2,\infty,\infty, \dots)<\cdots.
	\]
	\item\label{type2} 
	$\cdots <d'<d{\widehat{\,\,\,}}<d''<\cdots$, where 
		$d'$ and $d''$ have infinite projective dimension and either 
		$d''_0-d'_0=1$ or $d''_1-d'_1=1$ (but not both). 
		For instance, 
	\[
	\cdots < (-1,1,2,3,\dots) <(-1,1,\infty,\infty, \dots){\widehat{\,\,\,}}<
	(-1,2,3,4, \dots)<\cdots.
	\]
	\item\label{type3} 
		$\cdots< (d_0', d_0'+1, \infty, \infty, \dots)<d{\widehat{\,\,\,}}<
		(d_0'+1, d_0'+2, d_0'+3, d_0'+4, \dots)<\cdots$. 
		For instance, 
	\[
	\cdots < (0,1,\infty,\infty,\dots) <(0,2,3,4, \dots){\widehat{\,\,\,}}<
	(1,2,3,4, \dots)<\cdots.
	\]
	\item\label{type4} 
	$\cdots <d'<d{\widehat{\,\,\,}}<d''<\cdots$, where 
		$d'$ and $d''$ differ by two in the first entry. 
		For instance, 
	\[
	\cdots < (-1,2,3,4,\dots) < (0,2,3,4,\dots){\widehat{\,\,\,}} < 
	(1,2,3,4) < \cdots \text{ or}
	\]
	\[
	\cdots < (1,\infty,\infty,\infty,\dots) <
	(2,\infty,\infty,\infty, \dots){\widehat{\,\,\,}}<
	(3,\infty,\infty,\infty,\dots)<\cdots.
	\]
	\item\label{christine} $\cdots< (d_0,m+1,m+2,m+3,\dots) < (d_0,m+1,\infty,\infty,\dots){\widehat{\,\,\,}} < (d_0,\infty,\infty,\infty)<\cdots$, for instance
	\[
	\cdots <(0,2,3,4)<(0,2,\infty,\infty){\widehat{\,\,\,}} < (0,\infty,\infty,\infty)<\cdots
	\]
	\item\label{type03}  
		$\cdots < d' < d{\widehat{\,\,\,}}$, where $d' = (m,m+1,\infty,\infty,\dots)$ 
		and $d = (m,\infty,\infty,\infty,\dots)$ 
		is the maximal element of its chain.
	\item\label{type04}  
		$\cdots < d' < d{\widehat{\,\,\,}}$, where $d' = (m-1,\infty,\infty,\infty,\dots)$ 
		and $d = (m,\infty,\infty,\infty,\dots)$ 
		is the maximal element of its chain.
	\item\label{type01}  
		$d{\widehat{\,\,\,}} < d'' <\cdots$, 
		where $d$ is the minimal element of its chain.
\end{enumerate}

We can show on a case by case basis 
that each boundary halfspace of $\overline{\Sigma}_m$ 
(as determined by a submaximal chain from the list above) 
corresponds to one of the halfspaces defining $\overline{F}_m$; 
we provide details for a portion of case~\eqref{type1}.  
Consider a submaximal chain $\C$ of the form 
\[
\cdots <(d_0,d_1-1,\infty,\dots)<(d_0,d_1,d_2,\dots){\widehat{\,\,\,}}<(d_0,d_1,\infty,\dots)<\cdots,
\] 
where $d_2 = d_1+1$. 
Note that $\e^*_{2,d_2}(\pi_c) = 0$ for all $c = (c_0, c_1, \ldots) \in \C$ 
because either $c_2 < d_2$ or $c_2 > d_2$. 
This shows that $\pi_c$ lies in the hyperplane $\{\e_{2,d_2}^*=0\}$ 
for all $c\in C$.  Since, in addition, 
$\e^*_{2,d_2}(\pi_{d_0,d_1,d_2,\dots}) = 1$, it follows that 
$C$ corresponds to the halfspace $\{\e_{0,d_0+1}^*\geq 0\}$.  

Using similar arguments, we see that a submaximal chain of 
type~\eqref{type1} or~\eqref{type01} corresponds to $\{\e_{2,d_2}^* \geq 0\}$; 
type~\eqref{type2} corresponds to $\{\alpha_{d_1} \geq 0\}$ and type~\eqref{christine}
corresponds to $\{\alpha_{m+1}\geq 0\}$; 
type~\eqref{type3} or~\eqref{type03} to $\{\gamma_{d_0',m} \geq 0 \}$; 
and finally, chains of types~\eqref{type4} and~\eqref{type04} correspond to $\{\e_{0,m}^*\geq 0\}$. 
\end{proof}

\begin{proof}[Proof of Theorem~\ref{thm:main quadric}]
Let $E$ be the cone spanned by Betti diagrams of
  extremal modules of finite projective dimension and extremal modules
  of infinite projective dimension with the stated degree sequences.  
  We see that $D \subseteq E$ by Lemma~\ref{lem:DBQ(R)}, 
  noting that the modules there are extremal of finite
  projective dimension or of infinite projective dimension with the
  correct degree sequence. Thus by Theorem~\ref{thm:main}, 
  we have $\BQ(R) = D$, as desired.
\end{proof}

\medskip
\section{Resolutions over hypersurface rings of embedding dimension $1$  and degree at least $2$}
\label{sec:artinian}

\smallskip
Set $S := \kk[x]$ and $R := S/\<x^n\>$ for some $n \geq 2$. 
In this section, we give a full description of the cone of Betti diagrams of
$R$-modules, as well as its implications for the cone of Betti diagrams of maximal 
Cohen--Macaulay modules over any standard graded hypersurface ring.

\medskip
\begin{defn}\label{def:artinian:degseq}
We say that 
\[
d=(d_0,d_1,\dots)\in \prod_{i\in\NN} (\ZZ\cup \{\infty\})
\] 
is an $R$-\defi{degree sequence} if it has the form
\begin{enumerate}
\item $d = (d_0, \infty, \infty, \infty, \ldots)$ or
\item $d = (d_0, d_1, d_2,\ldots)$, 
	where $d_0 < d_1$ and $d_{i+2} - d_i = n$ for all $i \geq 0$.
\end{enumerate}
We define a partial order on $R$-degree sequences as follows:  
if $d$ has finite projective dimension and $d'$ has infinite dimension, then
we say that $d<d'$; otherwise, we use the termwise partial order.
\end{defn}

\medskip
Given an $R$-degree sequence $d = (d_0, d_1, \dots)$, 
we define a diagram $\pi_d \in \VV$ by 
\begin{equation*}
(\pi_d)_{i,j} = \begin{cases} 1 &\quad \text{if } j = d_i \not = \infty, \\ 
				0 &\quad \text{otherwise}.
				\end{cases}
\end{equation*}

\medskip
\begin{example}
If $n = 3$, then 
\[ 
\pi_{(0,\infty,\infty,\infty,\dots)}
  = \begin{pmatrix} 
  \vdots & \vdots & \\
  - & - & \cdots \\
  ^*1 & - & \cdots \\ 
  - & - & \cdots \\ 
  - & - & \cdots \\
  \vdots & \vdots & \end{pmatrix} 
  \quad \text{and} \quad \pi_{(0,1,3,4,\dots)}
  = \begin{pmatrix}
  	\vdots & \vdots & \vdots & \vdots & \vdots & \\
    - & - & - & - & - & \cdots \\
    ^*1 & 1 & - & - & - & \cdots  \\
    - & - & 1 & 1 & - & \cdots \\
    - & - & - & - & 1 & \cdots \\
    - & - & - & - & - & \cdots \\
    \vdots & \vdots & \vdots & \vdots & \vdots & 
  \end{pmatrix}.
\] 
\end{example}

\medskip
To describe the cone $\BQ(R)$, we define the following functionals on $v\in\VV$: 
\[
\e_{i,j}(v) := v_{i,j}, 
\qquad
\alpha_{i,k}(v) := \e_{i,k}(v) - \e_{i+2,k+n}(v), 
\qquad
\theta_k(v) := \sum_{j\leq k} \e_{2,j}(v) - \sum_{j\leq k-n+1} \e_{1,j}(v),\]
\[
\text{and} \qquad \eta_k(v) := \sum_{j \leq k} \left(\e_{1,j}(v) - \e_{2,j+1}(v)\right).
\]

\medskip
\begin{example}
The functional $\eta_3$ 
applied to a Betti diagram $\beta^R(M)$ is
given by taking the dot product of $\beta^R(M)$ with the following diagram:
\[ 
\left ( \begin{array}{rrrrrr}
       \vdots & \vdots & \vdots & \vdots  &\\
       ^*0& 1 & -1 & 0 & 0 &\cdots\\
       0&  1 & -1 & 0 & 0&\cdots \\
       0& 1 & -1 & 0 & 0&\cdots \\
       0& 0 & 0 & 0 & 0 & \cdots\\
       \vdots & \vdots & \vdots & \vdots& \\
       \end{array} \right ). 
\]
\end{example}

\begin{thm}\label{thm:main artinian}
The following cones in $\VV$ are equal:
\begin{enumerate}
	\item  The cone $\BQ(R)$ spanned by the Betti diagrams 
		of all finitely generated $R$-modules.
	\item  The cone $D$ spanned by $\pi_d$ for all $R$-degree sequences $d$.
	\item\label{item:F}  
		The cone $F$ defined as the intersection of the halfspaces 
		\begin{enumerate}
			\item\label{eps artinian} 
				$\{\e_{i,j} \geq 0\}$ for $i = 0, 1, 2$ and $j \in \ZZ$;
			\item\label{alpha artinian}
				$\{\alpha_{0,k} \geq 0\}$ for all $k \in \ZZ$;
			\item\label{theta artinian} 
				$\{\theta_k \geq 0\}$ for all $k \in \ZZ$;
			\item\label{eta artinian} 
				$\{\eta_k \geq 0\}$ for all $k \in \ZZ$;
			\item\label{periodic}
				$\{\pm \alpha_{i,k}
				\geq 0\}$ for all $i\geq 1, k\in \ZZ$;
			\item\label{beta12}
				$\{ \pm \eta_\infty
				\geq 0 \}$. 
		\end{enumerate}
\end{enumerate}
\end{thm}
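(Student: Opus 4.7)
\medskip

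\noindent\textbf{Proof proposal for Theorem~\ref{thm:main artinian}.}
The plan is to mirror the three-step strategy of Theorem~\ref{thm:main} by establishing the chain of inclusions $D\subseteq \BQ(R)\subseteq F\subseteq D$. The inclusion $D\subseteq \BQ(R)$ is immediate from Proposition~\ref{prop:artinian}: for each $R$-degree sequence $d$, the module $R(-d_0)$ (if $d_1=\infty$) or $R(-d_0)/\<x^{d_1-d_0}\>$ (if $d_1<\infty$, in which case $d_1-d_0<n$) has a minimal free $R$-resolution whose Betti diagram is exactly $\pi_d$. In the periodic case this resolution alternates multiplication by $x^{d_1-d_0}$ and $x^{n-(d_1-d_0)}$, forcing $d_{i+2}-d_i=n$ for all $i$.

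For the inclusion $\BQ(R)\subseteq F$, I would exploit the fact that $R$ is an Artinian principal ideal ring. By the structure theorem for finitely generated graded modules over $\kk[x]$, every finitely generated graded $R$-module decomposes as a direct sum of the cyclic modules $R(-d_0)$ and $R(-d_0)/\<x^{d_1-d_0}\>$ described above. Consequently, every Betti diagram is a finite nonnegative combination of the $\pi_d$, and it suffices to verify each defining functional of $F$ on each $\pi_d$. The checks are short case analyses: $\e_{i,j}$ nonnegativity is obvious; the two-periodic relation $\beta_{i,k}=\beta_{i+2,k+n}$ for $i\geq 1$ (which gives~\eqref{periodic}) is built into the degree-sequence condition $d_{i+2}-d_i=n$ and can also be read off the matrix factorization of $x^n$; $\alpha_{0,k}(\pi_d)\geq 0$ holds since $(\pi_d)_{2,k+n}\leq (\pi_d)_{0,k}$; the functionals $\theta_k$ and $\eta_k$ telescope to counts of residual occurrences of $d_1+in$ versus $d_0+n+in$ in the allowable range, which are nonnegative because the strict inequality $d_0<d_1<d_0+n$ forces the $\e_{1,\cdot}$-terms to lie weakly earlier than the corresponding $\e_{2,\cdot+1}$-terms; and $\eta_\infty(\pi_d)=0$ on finite-pd diagrams trivially and on periodic diagrams by a telescoping argument comparing the two interleaved arithmetic progressions.

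For the inclusion $F\subseteq D$, I would follow the template of Lemma~\ref{lem:FD}. First, use Lemma~\ref{lem:simplicial} (which transfers verbatim: for any chain $\C$ of $R$-degree sequences the $\pi_d$ are linearly independent because each has a nonzero entry in a position where all strictly larger diagrams vanish) to build a simplicial fan $\Sigma$ whose support lies in $D$. Second, truncate to the subspace $\VV_m$ of Betti diagrams supported in a regularity-$m$ band, and then quotient by the two-periodic relation~\eqref{periodic} and the constraint $\eta_\infty=0$ to obtain a finite-dimensional quotient $\overline{\VV}_m$. The image $\overline{\Sigma}_m$ is equidimensional: every maximal chain in the truncated poset has the same, combinatorially determined length, determined by the unique cutoff point at which one transitions from infinite-pd to finite-pd degree sequences under the partial order of Definition~\ref{def:artinian:degseq}. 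Third, apply Lemma~\ref{lem:boundaryD} to identify the boundary halfspaces of $\overline{\Sigma}_m$ with defining halfspaces of $\overline{F}_m$. The submaximal chains come in several types depending on whether the two neighbors of the omitted $R$-degree sequence have finite or infinite projective dimension, and whether they straddle the finite/infinite boundary. A direct case-by-case calculation matches type (a) chains (purely finite-pd neighbors differing in the first or second entry) with $\e_{i,j}$ or $\alpha_{0,k}$ functionals, type (b) chains (purely infinite-pd neighbors) with $\theta_k$ or $\eta_k$, the boundary-transition types with $\eta_\infty$, and the extremal-end types with the remaining $\e_{i,j}$.

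The main obstacle, as in the quadric case, is the case analysis for $F\subseteq D$: the partial order places all finite-pd sequences below all infinite-pd sequences, which introduces a new type of submaximal chain (where the omitted vertex sits at the finite/infinite interface) that has no analogue in the polynomial ring theory and whose corresponding boundary halfspace is the \emph{equality} $\eta_\infty=0$. Identifying this functional as the correct one and verifying that the corresponding submaximal chains are uniquely extendable is the genuinely new combinatorial input; the remaining types are governed by the periodic structure and parallel the quadric argument closely.
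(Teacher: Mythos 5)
Your overall strategy matches the paper's: show $D\subseteq \BQ(R)\subseteq F\subseteq D$ via explicit cyclic modules, verify the functionals on pure diagrams, then build a simplicial fan and identify its boundary facets with defining halfspaces of $F$. The first two inclusions are handled correctly. However, the case analysis for $F\subseteq D$ contains a genuine error. You claim that the new submaximal chain straddling the finite/infinite-projective-dimension interface ``has no analogue in the polynomial ring theory and whose corresponding boundary halfspace is the equality $\eta_\infty=0$.'' This cannot be correct: $\eta_\infty$ vanishes on \emph{every} pure diagram $\pi_d$ (as you yourself observe while verifying $\BQ(R)\subseteq F$), so the hyperplane $\{\eta_\infty=0\}$ contains the entire cone and is not a supporting halfspace attached to any boundary facet. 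In the paper, $\eta_\infty=0$ plays the same role as the two-periodicity constraints $\pm\alpha_{i,k}\geq 0$ for $i\geq 1$: it is one of the equalities cutting out the linear subspace $\overline{\WW}_m$ in which the truncated cone is full-dimensional, and it is not the image of any submaximal chain under Lemma~\ref{lem:boundaryD}.

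The boundary halfspaces that actually arise at the finite/infinite interface are $\{\e_{1,m+1}\geq 0\}$ (for the chain omitting the maximal infinite-pd vertex $(m-n+2,m+1,\ldots)$) and $\{\alpha_{0,-m}\geq 0\}$ (for the chain omitting the minimal finite-pd vertex $(-m,\infty,\ldots)$). Your grouping of the remaining types is also reversed: chains among infinite-pd sequences produce the $\e_{1,\cdot}$, $\e_{2,\cdot}$, $\theta_k$, and $\eta_k$ halfspaces, while chains among finite-pd sequences produce the $\alpha_{0,\cdot}$ halfspaces. Note also that a finite-pd degree sequence has the form $(d_0,\infty,\ldots)$ with only one finite entry, so your phrase ``purely finite-pd neighbors differing in the first or second entry'' does not parse. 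The framework you set up is sound, but the enumeration and matching of submaximal chain types (there are nine in the poset $P_m$, including endpoint chains) needs to be carried out explicitly and corrected along these lines.
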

\begin{proof}
The equality $\BQ(R)=D$ follows from the structure theorem 
of finitely generated modules over principal ideal domains.  
Using extremal rays, it is also straightforward to check that 
$\BQ(R)\subseteq F$. 
We complete the proof by showing that $F\subseteq D$. 

For this final inclusion, note that the proof of Lemma~\ref{lem:simplicial} 
also holds in this context, so that 
$\Sigma = \{\pos(\C)\mid\text{$\C$ is a finite chain of $R$-degree sequences}\}$ 
is a simplicial fan; it suffices to prove that $F \subseteq \supp(\Sigma)$. 
Let $\overline{\VV}$ denote the natural projection of $\VV$ that sends   
$v\in\VV$ to its first three columns, denoted $v\mapsto \overline{v}$. 
Denote the respective images of $F$ and $\Sigma$ under this map by 
$\overline{F}$ and $\overline{\Sigma}$. 
For $m\geq 0$, let $P_m := 
\{\text{degree sequences } d\mid \overline{\pi}_d\in\overline{\VV}_m\}$, 
and define $\overline{\VV}_m$, $\overline{\Sigma}_m$, and $\overline{F}_m$ in a manner analogous to the proof of Lemma~\ref{lem:FD}. 
%
Since every pure diagram $\pi_d$ satisfies the functionals of 
types~\eqref{periodic} and~\eqref{beta12} in the definition of $F$,
it now suffices to show that 
$\overline{F}_m \subseteq \supp(\overline{\Sigma}_m)$ for all $m\geq 0$.  
%
Note that $\overline{F}_m$ and $\supp(\overline{\Sigma}_m)$ are both contained in the subspace $\overline{\WW}_m$ of $\overline{\VV}_m$ 
given by 
\[ 
\overline{\WW}_m := 
\left\{
\overline{v}\in\overline{\VV}_m \ \big\vert\  
\eta_\infty(\overline{v}) = 0 \ \text{ and }\ 
\overline{v}_{2,j} = 0 \text{ for } -m+2 \leq j < n-m.
\right\}.
\] 
We thus view them as objects in there. 

Direct computation shows that $\overline{\Sigma}_m\subseteq \overline{\WW}_m$ is a full-dimensional, equidimensional simplicial fan. 
To work towards $\overline{F}_m \subseteq \supp(\overline{\Sigma}_m)$, note that  
defining halfspaces for $\overline{F}_m\subseteq \overline{\WW}_m$ are:
\begin{align*}
\{ \e_{i,j} \geq & \; 0 \}
\quad \text{for } i \in \{0,1,2\} \text{ and } j\in \ZZ \cap [-m+i, m+i], \\
\{\alpha_{0,j} \geq & \; 0 \}
\quad \text{for } j\in \ZZ\cap[-m, m+2-n], \\
\bigg\{ \theta_{k,m} 
	:= 
	\sum_{j = -m+2}^k \e_{2,j}
	- \sum_{j=-m+1}^{k-n+1} \e_{1,j} \geq & 0 
\bigg\}
\quad \text{for } k\in \ZZ\cap [n-m-1,m+2], \quad \text{and} \\
\bigg\{
{\eta_k}_m :=
	\sum_{j = -m+1}^k  (\e_{1,j} - \e_{2,j+1}) \geq & 0 
\bigg\}
\quad \text{for } k\in \ZZ\cap [-m+1,m+1].
\end{align*}
Each boundary halfspace of $\overline{\Sigma}_m$ depends on 
certain submaximal chains given by data of the form 
$\cdots<d'<d{\widehat{\,\,\,}}<d''<\cdots$.
Such submaximal chains take the following forms:
\begin{enumerate}[\upshape(a)]
\item\label{typeR} 
	$\cdots<(d_0, d_1, \dots) < (d_0 + 1, d_1,\dots){\widehat{\,\,\,}} < 
	(d_0 + 2, d_1,\dots)<\cdots$, where $d_1 < \infty$;
\item\label{typeL} 
	$\cdots<(d_0, d_1, \ldots) < (d_0, d_1 + 1, \ldots){\widehat{\,\,\,}} < 
	(d_0, d_1 + 2, \ldots)<\cdots$, where $d_1 < \infty$;
\item\label{typeE} 
	$\cdots<(d_0, d_0 + 1, \ldots) < (d_0, d_0 + 2, \ldots){\widehat{\,\,\,}} < 
	(d_0 + 1, d_0 + 2, \ldots)<\cdots$; 
\item\label{typeE'}
	$\cdots<(d_0, d_0 + n - 1, \ldots) < (d_0+1, d_0 + n - 1, \ldots){\widehat{\,\,\,}} < 
	(d_0 + 1, d_0 + n, \ldots)<\cdots$; 	
\item\label{typeE''}
	$\cdots<(m-n+2, m, \ldots) < (m-n+2, m+1, \ldots){\widehat{\,\,\,}} < 
	(-m, \infty, \ldots)<\cdots$; 	
\item\label{typeS}
	$\cdots<(m-n+2, m+1, \ldots) < (-m, \infty, \ldots){\widehat{\,\,\,}} < 
	(-m+1, \infty, \ldots)<\cdots$; 	
\item\label{typeI}
	$\cdots<(d_0, \infty, \ldots) < (d_0+1, \infty, \ldots){\widehat{\,\,\,}} < 
	(d_0+2, \infty, \ldots)<\cdots$; 	
\item\label{top} $(-m,-m+1,
					\dots){\widehat{\,\,\,}} < (-m,-m+2,\dots)<\cdots$;
\item\label{bottom} $\cdots<(m-1,\infty,\dots) < (m,\infty,\dots){\widehat{\,\,\,}}$.
\end{enumerate}
One may now verify that the boundary halfspaces corresponding to these submaximal chains are, respectively: 
\begin{enumerate}[\upshape(a)]
	\item $\{\e_{2,d_0+1+n}\geq 0\}$; 
	\item $\{\e_{1,d_1+1}\geq 0\}$;
	\item $\{ 
		\theta_{d_0+n,m}
		\geq 0\}$; 
	\item $\{ 
		{\eta_{d_0+n-1}}_m
		\geq 0\}$; 
	\item $\{ \e_{1,m+1} \geq 0\}$; 
	\item $\{
		\alpha_{0,-m}
		\geq 0\}$; 
	\item $\{ 
		\alpha_{0,d_0+1}
		\geq 0\}$; 
	\item $\{ \e_{1,-m+1} \geq 0\}$; 
	\item $\{ \e_{0,m} \geq 0\}$ if $n>2$ or $\{\alpha_{0,m}\geq 0\}$ if $n=2$. 
\end{enumerate}
As each of these halfspaces appear in our definition of $\overline{F}_m$ above, 
we obtain $F\subseteq D$, as desired. 
\end{proof}

As illustrated by the following corollary, Theorem~\ref{thm:main artinian} 
has implications for the study of Betti diagrams of maximal 
Cohen--Macaulay modules over any standard graded hypersurface ring. 

\begin{cor}\label{cor:MCM} 
Let $T = \kk[x_1,\ldots,x_r]/\<f\>$ for any homogeneous $f$ of degree at least 2, 
and let $\BQMCM(T)$ denote the cone of Betti diagrams of 
maximal Cohen--Macaulay $T$-modules. Then there is an inclusion 
\[
\BQMCM(T) \subseteq \BQ(R),
\] 
where $R = \kk[x]/\<x^{\deg(f)}\>$. 
These cones are equal if $\characteristic(\kk)$ does not divide $\deg(f)$. 
\end{cor}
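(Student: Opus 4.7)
I would prove Corollary~\ref{cor:MCM} by treating the two inclusions separately; only the reverse direction should need the characteristic hypothesis.

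For the inclusion $\BQMCM(T)\subseteq\BQ(R)$, my plan is a general-hyperplane-section argument. Given an MCM $T$-module $M$, since $\depth_T M=\dim T=r-1$, I can pick a sequence $\underline\ell=\ell_1,\dots,\ell_{r-1}$ of general linear forms that is regular on both $T$ and $M$. After a linear change of coordinates in the polynomial ring making each $\ell_i$ equal to $x_i$, the quotient $\overline T := T/\<\underline\ell\>$ becomes $\kk[x_r]/\<f(0,\dots,0,x_r)\>$; the remaining polynomial is a homogeneous form of degree $n$ in $x_r$, hence a scalar multiple of $x_r^n$, and this scalar is nonzero by the genericity of $\underline\ell$. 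Rescaling yields $\overline T\cong R$ as graded rings. Because $\underline\ell$ is regular on $T$ and on $M$, tensoring the minimal graded $T$-free resolution of $M$ with $\overline T$ produces the minimal graded $\overline T$-free resolution of $\overline M := M/\underline\ell M$ (minimality is preserved because the $\ell_i$ lie in the irrelevant ideal). Hence $\beta^T(M)=\beta^{\overline T}(\overline M)=\beta^R(\overline M)\in\BQ(R)$.

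For the reverse inclusion $\BQ(R)\subseteq\BQMCM(T)$ under $\characteristic(\kk)\nmid n$, I would use Theorem~\ref{thm:main artinian} to reduce to showing that every extremal ray of $\BQ(R)$ --- equivalently each pure diagram $\pi_d$ from Proposition~\ref{prop:artinian} --- is a positive scalar multiple of $\beta^T(M)$ for some MCM $T$-module $M$. The ray of type $d=(d_0,\infty,\dots)$ is realized by $T(-d_0)$, which is MCM over $T$. For the ray of type $d=(d_0,d_1,d_0+n,d_1+n,\dots)$ with $\delta := d_1-d_0\in\{1,\dots,n-1\}$, Eisenbud's matrix factorization theorem reduces the task to producing some $a\times a$ matrix factorization $(\phi,\psi)$ of $f$ in which $\phi$ has all column-degrees equal to $\delta$ (and hence $\psi$ has all column-degrees equal to $n-\delta$). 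Then $M=\coker\phi$, with the appropriate shift, is MCM over $T$ and satisfies $\beta^T(M)=a\cdot\pi_d$, placing $\pi_d$ in $\BQMCM(T)$.

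The main obstacle is constructing, for every $\delta\in\{1,\dots,n-1\}$, a matrix factorization of $f$ of column-degree type $(\delta,n-\delta)$; for irreducible $f$ there is no $1\times 1$ factorization, so higher-rank constructions are essential. My plan is to pass to $\bar\kk$ (which preserves Betti diagrams), choose coordinates making $f$ monic of degree $n$ in $x_r$, and exploit the fact that $f$ is separable in $x_r$ over $\bar\kk(x_1,\dots,x_{r-1})$ precisely when $\characteristic(\kk)\nmid n$. Splitting $f=\prod_{i=1}^n(x_r-\alpha_i)$ over the separable closure and then combining roots in Galois-stable packets of size $\delta$, together with a Galois-descent / Weil-restriction step, should produce a matrix factorization defined over $\kk$ with the required column-degree type. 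I expect this descent step to be where the characteristic hypothesis is genuinely used: when $\characteristic(\kk)\mid n$, separability fails and the construction may have no analogue over $\kk$, so the corresponding extremal rays need not lie in $\BQMCM(T)$.
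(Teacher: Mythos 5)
Your argument for the inclusion $\BQMCM(T)\subseteq\BQ(R)$ is correct and is, in substance, the same argument the paper gives: cut down by a maximal sequence of linear forms that is regular on both $T$ and $M$, identify the quotient ring with $R$, and use that tensoring a minimal free resolution by the quotient preserves both exactness and minimality (the paper packages this as a citation to Avramov's ``Infinite free resolutions,'' Corollary~1.2.4, rather than spelling out the tensor argument).

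The reverse inclusion is where there is a real gap. Your plan is to split $f=\prod_{i=1}^n(x_r-\alpha_i)$ over a separable closure of the function field $\bar\kk(x_1,\dots,x_{r-1})$, group the roots into Galois-stable packets of size $\delta$, and descend to a matrix factorization over $\kk$ of column-degree type $(\delta,n-\delta)$. This fails for the most important case: if $f$ is irreducible over $\kk(x_1,\dots,x_{r-1})$ then $\Gal$ acts transitively on $\{\alpha_1,\dots,\alpha_n\}$, so the only Galois-stable subsets of the root set have size $0$ or $n$; there are no stable packets of intermediate size $\delta$. Even in the reducible case, a stable packet $\{\alpha_{i_1},\dots,\alpha_{i_\delta}\}$ gives $\prod_j(x_r-\alpha_{i_j})$ with coefficients only in $\kk(x_1,\dots,x_{r-1})$, not in $\kk[x_1,\dots,x_{r-1}]$ (the $\alpha_i$ are algebraic functions, not forms), so Weil restriction over the function field does not produce a matrix over the graded polynomial ring, let alone a homogeneous one of fixed column degree. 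Finally, the asserted equivalence ``$f$ separable in $x_r$ over $\bar\kk(x_1,\dots,x_{r-1})$ $\iff$ $\characteristic(\kk)\nmid n$'' is not correct in either direction: after a generic linear change of coordinates, any reduced $f$ becomes separable in $x_r$ regardless of the characteristic (e.g.\ $f=xy+y^2$ in characteristic $2$), while a nonreduced $f$ can fail to be separable even when $\characteristic\nmid n$. The paper sidesteps all of this by invoking the theorem of Backelin--Herzog--Sanders \cite{BHS}: when $\characteristic(\kk)\nmid n$ there is a matrix factorization $A_1A_2\cdots A_n=f\cdot I$ with each $A_i$ a square matrix of linear forms (their proof goes through generalized Clifford algebras, not Galois descent). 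Grouping consecutive factors then gives matrix factorizations of every column-degree type $(\delta,n-\delta)$, and the cokernel of $A_1\cdots A_\delta$ is the desired MCM module realizing $\pi_d$ on each infinite extremal ray. If you want a self-contained proof you would need to reprove something at the strength of \cite{BHS}; the descent sketch as stated does not do that.
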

\begin{proof} 
Let $n$ be the degree of the homogeneous polynomial $f$, 
so that $R = \kk[x]/\<x^n\>$. 
Recall that $T := \kk[x_1,\ldots,x_r]/\<f\>$, and 
let $M$ be a maximal Cohen--Macaulay $T$-module.
To show that $\BQMCM(T) \subseteq \BQ(R)$, 
we find an $R$-module $M'$ with the same Betti diagram. 

We may assume $\kk$ is infinite by taking a flat extension.  
Then we find a sequence of $M$- and $R$-regular linear forms 
$(\ell_1,\ldots,\ell_{r-1})$.  Note that $T/\<\ell_1,\ldots,\ell_{r-1}\> \cong R$. 
Applying~\cite[Corollary 1.2.4]{avramov-notes}, we see that 
$\beta^T(M) = \beta^{T/\<\ell_1,\ldots,\ell_{r-1}\>}(M/\<\ell_1,\ldots,\ell_{r-1}\>)$, 
as desired.

For the second statement, assume that $(\deg f,\characteristic(\kk))=1$. 
Since $\BQ(R) = D$, it is enough to show that for each $\pi_d \in D$, there exists a 
maximal Cohen--Macaulay $T$-module $M_d$ such that $\beta^T(M_d) = \pi_d$.  
If $d = (d_0,\infty,\dots)$, then $\beta^T(T(-d_0))= \pi_d$.  

Now consider the case that $d=(d_0,d_1,d_0+n,\dots)$, 
where without loss of generality $d_0 = 0$ and hence $d_1 < n$. 
In \cite{BHS}, it is shown that there exists a matrix factorization of $f$ that can be 
decomposed into a product of $n$ matrices of linear forms.  
Suppose $A_1 A_2 \cdots A_n$ is such a decomposition.  
If $M := \coker(A_1 \cdots A_{d_1})$ is presented by this matrix of 
$(d_1)$-forms, then it follows that $\beta^T(M) = \pi_d$.
Hence $\BQMCM(T) = \BQ(R)$, as desired. 
\end{proof}

\section{Multiplicity conjectures and decomposition algorithms}
\label{sec:mult and decomp}

In this section, $R$ denotes a standard graded hypersurface rings of the form 
$\kk[x]/\<x^n\>$ for any $n$ or $\kk[x,y]/\<q\>$, where $q$ is any homogeneous quadric. 
We first note that Theorem~\ref{thm:simplicial fan} follows from the proofs of 
Lemma~\ref{lem:FD} and Theorem~\ref{thm:main artinian},  
as they provide the desired simplicial structure. 

This simplicial structure gives rise to 
a greedy decomposition algorithm of Betti diagrams into pure diagrams, as in
\cite[\S1]{eis-schrey1}. The key fact is that, since the cone $\BQ(R)$ is
simplicial, for any module $M$, there is a finite chain of degree
sequences $d_1 < \ldots < d_n$ such that $\beta^R(M)$ is a positive
linear combination of the $\pi_{d_i}$. And as noted in Lemma
\ref{lem:simplicial}, the diagram $\pi_{d_i}$ has a nonzero entry in a
position in which, for all $j > i$, $\pi_{d_j}$ has a zero entry.
We now present a detailed example to illustrate the algorithm.

\begin{example}\label{ex:decomposition}
Let $R=S/\<x^2\>$ and $M=\coker
\begin{pmatrix}
x&xy^2&y^4\\
0&y^3&xy^3
\end{pmatrix}$.  
Then  we have
\[
\beta^R(M)=
\begin{pmatrix}
2&1&1&1&\cdots\\
-&-&-&-&\cdots\\
-&1&-&-&\cdots\\
-&1&1&1&\cdots
 \end{pmatrix}.\]
We decompose $\beta^R(M)$ by first considering the minimal $R$-degree 
sequence that could possibly contribute to $\beta^R(M)$, which is 
$(0,1,2,3,\dots)$.  We then subtract $\frac{1}{2} \pi_{(0,1,2,3,\dots)}$, as this is the largest multiple that can be removed 
while remaining inside $\BQ(R)$. This yields 
\[
\beta^R(M)-\frac{1}{2} \pi_{(0,1,2,3,\dots)}=\begin{pmatrix}
\frac{3}{2}&-&-&-&\cdots\\
-&-&-&-&\cdots\\
-&1&-&-&\cdots\\
-&1&1&1&\cdots
 \end{pmatrix}.
\] 
We next subtract one copy of $\pi_{(0,3,\infty,\infty,\dots)}$, to obtain
\[
\beta^R(M)-\frac{1}{2} \pi_{(0,1,2,3,\dots)}-\pi_{(0,3,\infty,\infty,\dots)}
=\begin{pmatrix}
\frac{1}{2}&-&-&-&\cdots\\
-&-&-&-&\cdots\\
-&-&-&-&\cdots\\
-&1&1&1&\cdots
 \end{pmatrix}.
\]
Note that the remaining Betti diagram equals 
$\frac{1}{2}\pi_{(0,4,5,6,\dots)}$.  
In particular, $\beta^R(M)$ lies in the face corresponding to the chain
\[
(0,1,2,3,\dots)<(0,3,\infty,\infty,\dots)<(0,4,5,6,\dots).
\]
\end{example}

\medskip
The existence of these simplicial structures also gives rise to $R$-analogues of the 
Herzog--Huneke--Srinivasan Multiplicity Conjectures. 
We say that an $R$-degree sequence $d$ is \defi{compatible} with 
a Betti diagram $\beta^R(M)$ if $\beta^R_{i,d_i}(M) \not = 0$ when $d_i < \infty$.

\begin{cor}\label{cor:mult conj}
Let $M$ be an $R$-module generated in a single degree.  
Let $\underline{d}=(\underline{d_0},\underline{d_1},\dots)$ be 
the minimal $R$-degree sequence compatible with $\beta^R(M)$, 
and let $\overline{d}=(\overline{d_0},\overline{d_1},\dots)$ be 
the maximal $R$-degree sequence compatible with $\beta^R(M)$.
\begin{enumerate}
	\item\label{mult:1}  
		We have
		\[
		e(M) \leq \beta^R_0(M)\cdot e(\pi_{\overline{d}}).
		\]
	\item\label{mult:2}  
		If $\overline{d_1}<\infty,$ then 
		\[
		\beta^R_{0}(M)\cdot e(\pi_{\underline{d}})\leq e(M) 
		\leq \beta^R_0(M)\cdot e(\pi_{\overline{d}}), 
		\]
		with equality on either side if and only if 
		$\underline{d}=\overline{d}$.
	\end{enumerate}
\end{cor}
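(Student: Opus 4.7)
The plan is to mirror the standard Boij--S\"oderberg derivation of the Multiplicity Conjectures from a simplicial fan decomposition. By Theorem~\ref{thm:simplicial fan} and the greedy decomposition algorithm illustrated in Example~\ref{ex:decomposition}, there is a chain of $R$-degree sequences $d^{(1)}<d^{(2)}<\cdots<d^{(s)}$ and positive rationals $c_1,\ldots,c_s$ such that
\[
\beta^R(M) \;=\; \sum_{i=1}^s c_i\,\pi_{d^{(i)}}.
\]
I would first verify that $d^{(1)} = \underline{d}$ and $d^{(s)} = \overline{d}$ by comparing the extreme nonzero positions of $\beta^R(M)$ with the supports of the $\pi_{d^{(i)}}$: each $\pi_{d^{(i)}}$ is automatically compatible with $\beta^R(M)$, and the minimal (respectively maximal) compatible $R$-degree sequence must appear in the chain because any strictly smaller (respectively larger) compatible sequence would be forced in by the greedy subtraction step.

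Since $M$ is generated in a single degree, every $\pi_{d^{(i)}}$ has $\beta_0 = 1$, so $\beta^R_0(M) = \sum_i c_i$. Because multiplicity is a linear functional on $\BQ(R)$ (a fact that follows from the Hilbert-series computation behind the functional $\gamma_k$ in the proof of Lemma~\ref{lem:BQ(R)F} together with the eventual $2$-periodicity of resolutions over the quadric hypersurface, and from the analogous direct Hilbert-function computations in the Artinian case), we also obtain
\[
e(M) \;=\; \sum_{i=1}^s c_i\,e(\pi_{d^{(i)}}).
\]
Both bounds then reduce to the monotonicity statement $e(\pi_d) \leq e(\pi_{d'})$ whenever $d \leq d'$ lie in a common chain of $R$-degree sequences. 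Combining monotonicity with $\sum c_i = \beta^R_0(M)$ immediately yields the desired inequalities. Equality in either direction forces $e(\pi_{d^{(i)}})$ to be constant along the chain, and by strict monotonicity of $e$ across distinct degree sequences this collapses the chain to a single element, forcing $\underline{d} = \overline{d}$.

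The main obstacle will be verifying the monotonicity of $e(\pi_d)$ along the partial order, particularly across the interface between finite- and infinite-projective-dimension sequences in Definition~\ref{def:quadric:degseq} (and its analogue Definition~\ref{def:artinian:degseq}). This is a finite case analysis using explicit formulas for $e(\pi_d)$, computed from the cokernels constructed in Lemma~\ref{lem:DBQ(R)} and their Artinian analogues. The hypothesis $\overline{d_1} < \infty$ in part~\eqref{mult:2} is used precisely to rule out a free summand $R(-\overline{d_0})$ of $M$: without it, the ``bottom'' diagram $\pi_{\underline{d}}$ can satisfy $e(\pi_{\underline{d}}) > e(M)/\beta^R_0(M)$ and break the lower bound, whereas part~\eqref{mult:1} needs no such restriction because the upper monotonicity always holds.
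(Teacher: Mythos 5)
Your overall structure matches the paper's: decompose $\beta^R(M)$ along a chain of $R$-degree sequences via the simplicial fan structure, use that the column-$0$ coefficients sum to $\beta_0^R(M)$, and exploit monotonicity of $e(\pi_d)$ along chains. The paper's proof proceeds exactly this way when $\overline{d_1}<\infty$, and it computes $e(\pi_{(0,d_1,\infty,\dots)})=2d_1$ and $e(\pi_{(0,d_1,d_1+1,\dots)})=2d_1-1$ to get strict monotonicity, then concludes by convexity.

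There is, however, a genuine gap in how you handle the case $\overline{d_1}=\infty$, and it stems from the claim that ``multiplicity is a linear functional on $\BQ(R)$.'' That assertion is false whenever the decomposition mixes dimensions. Over $R=\kk[x,y]/\<q\>$, a module with a free summand has dimension $1$, while the pure diagrams $\pi_{(d_0,d_1,\infty,\dots)}$ and $\pi_{(d_0,d_1,d_1+1,\dots)}$ with $d_1<\infty$ correspond to finite-length modules; the multiplicity of a one-dimensional module is the (normalized) leading Hilbert-polynomial coefficient, to which those zero-dimensional pieces contribute nothing. Concretely, $e(R\oplus\kk)=e(R)=2$, not $e(R)+e(\kk)=3$. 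Your own final paragraph implicitly acknowledges this --- you say the lower bound ``can break'' when $\overline{d_1}=\infty$ --- but if $e$ really were linear, the same monotonicity argument would force the lower bound to hold, so the two claims contradict each other. What actually happens in the paper when $\overline{d}=(\underline{d_0},\infty,\dots)$ is that $e(M)=a_s\,e(\pi_{\overline{d}})$ (only the top diagram contributes to multiplicity), and then $a_s\leq\beta_0^R(M)$ gives part (1) directly, with no appeal to monotonicity or linearity. You would need to add that separate argument; as written, the derivation of part (1) for the free-summand case is not valid. (A small secondary point: ``every $\pi_{d^{(i)}}$ has $\beta_0=1$'' is true of every pure diagram regardless of $M$; the relevant consequence of $M$ being generated in a single degree is that all $d^{(i)}_0$ coincide, which is what makes $\sum c_i=\beta_0^R(M)$ and lets the monotonicity comparison reduce to comparing $d_1$ and the tail.)
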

\begin{proof}
Since $M$ is generated in a single degree, we may assume that $\underline{d_0}=0$.  
By Theorem~\ref{thm:simplicial fan}, there is a unique chain 
$\underline{d}=d^0<d^1<\cdots<d^s=\overline{d}$ for which 
\begin{equation}\label{eqn:BSdecomp}
\beta^R(M)=\sum_{i=0}^s a_i\pi_{d^i}. 
\end{equation}
If $\overline{d}=(0,\infty,\infty,\dots)$,  then $M$ has dimension $1$ and
$e(M)=a_{s}e(\pi_{\overline{d}})$. 
Since $a_s\leq \beta^R_{0,0}(M)$, this proves \eqref{mult:1} 
in the case that $\overline{d_1}=\infty$.

We now assume that $\overline{d_1}=\infty$, and 
prove~\eqref{mult:2}, which implies \eqref{mult:1} for this remaining case. 
We first compute the multiplicity of $\pi_d$ for any $R$-degree sequence $d$ of the form 
$d=(0,d_1,d_2,d_3,\dots)$ with $d_1<\infty$. 
We consider separately the cases $\infty=d_2=d_3=\cdots$ 
and $d_i=d_1+i-1$ for all $i\geq 2$.  

We may assume that $\kk$ is infinite by taking a flat extension.  For the first case, 
we may assume after a possible change of coordinates 
that $y$ is a nonzero divisor on $R$.  Then the Betti diagram of $R/\<y^{d_1}\>$ equals 
$\pi_{(0,d_1,\infty,\infty,\dots)}$, and hence
\[
e(\pi_{(0,d_1,\infty,\infty,\dots)})=e(R/\<y^{d_1}\>)=2d_1.
\]
For the remaining case, the Betti diagram of $R/\<y^{d_1},xy^{d_1-1}\>$ equals 
$\pi_{(0,d_1,d_1+1,d_1+2,\dots)}$, and hence
\[
e(\pi_{(0,d_1,d_1+1,d_1+2,\dots)})=e(R/\<y^{d_1},xy^{d_1-1}\>)=2d_1-1.
\]

Note that, since $\overline{d_1}<\infty$, every pure diagram $\pi_{d^i}$ 
arising in the decomposition \eqref{eqn:BSdecomp} satisfies $d^i_0=0$ 
and $d^i_1<\infty$.  Therefore
\[
e(\pi_{d^0})<e(\pi_{d^1})<\cdots<e(\pi_{d^s}).
\]
By convexity, this implies \eqref{mult:2}. 
\end{proof}

\appendix
\section{Convex Geometry}
\label{sec:notation}
In this appendix, we provide background on some convex geometry.   The curious reader may turn to \cite[Chapters 1,2,7]{ziegler} 
for even further details.

Let $V$ be a $\QQ$-vector space. A subset $C \subseteq V$ 
is a \defi{convex cone} if it closed under addition and multiplication by elements of 
$\QQ_{\geq 0}$.  For a subset $B \subseteq V$, $\pos(B)$ denotes the 
\defi{positive hull of $B$}, defined as 
$\pos(B) := \left \{ \sum_{b \in B} a_b b \mid a_b \in \QQ_{\geq 0}
\right \}$, which is clearly a cone. A
\defi{ray} is the $\QQ_{\geq 0}$-span on an element of $V$. 
A ray in a positive hull $\pos(B)$ is an \defi{extremal ray} of $\pos(B)$ 
if it does not lie in $\pos(B\setminus \{b\})$.

We say $C$ is a \defi{$n$-dimensional simplicial cone} if $C
= \pos(B)$ for a set of $n$ linearly independent vectors $B$. An
\defi{$m$-dimensional face} of
$C$ is a subset of the form $\pos(B')$, for $B'$ a subset of
$m$ vectors of $B$. A \defi{facet} of
$C$ is an $(n-1)$-dimensional face.

A \defi{simplicial fan} $\Sigma$ is a collection of simplicial 
cones $\{C_i\}$ such that $C_i \cap C_j$ is a face of both $C_i$ and $C_j$ for all $i,j$.
We refer to $\bigcup_i C_i\subseteq V$ as the \defi{support} of
$\Sigma$.  We say that a subset $\Sigma$ of $V$
has the \defi{structure of a simplicial fan} if $\Sigma$ is the support of some
simplicial fan.

A simplicial fan $\Sigma$ that is a finite union of cones is \defi{$m$-equidimensional} 
if each maximal cone has dimension $m$. 
A \defi{facet} of an equidimensional fan is a facet of any maximal cone, 
and it is a \defi{boundary facet} if it is contained in exactly one maximal cone.  
If $\dim V$ is finite and $\Sigma$ is $(\dim V)$-equidimensional, then each boundary facet $F$
determines a unique, up to scalar, functional $L\colon V \to \QQ$
such that $L$ vanishes along $F$ and is nonnegative on the (unique) maximal cone 
containing $F$;
we refer to the halfspace $\{L \geq 0\}$ as a \defi{boundary halfspace} of the fan.  

Simplicial fan structures that come from posets arise throughout this paper.  
Let $P$ be a poset and assume that there is a map $\Phi\colon P\to V$ such
that $\Phi(p_1), \dots, \Phi(p_s)$ is linearly independent in $V$ for
all chains $p_1 < \ldots < p_s$ in $P$ and such that the union of
simplicial cones
\[
\Sigma(P,\Phi):=\left\{ \pos\left(\{\Phi(p_1), \dots, \Phi(p_s)\} \right) \mid s\in \ZZ_{\geq 0} \text{ and } p_1< \dots < p_s \text{ is a chain in } P \right\}
\]
is a simplicial fan.  (When $P$ is finite, this fan is referred to as a \defi{geometric realization} of $P$.  In our cases, $P$ is the poset of
$R$-degree sequences, and $\Phi$ is the map $d\mapsto \pi_d\in \VV$.)
If $\dim V$ is finite, then maximal cones of $\Sigma(P,\Phi)$ are in bijection with maximal
chains in $P$, and submaximal chains in $P$ are in bijection with facets of $\Sigma(P,\Phi)$.

\begin{lemma}\label{lem:boundaryD} 
Let $V$ be an $m$-dimensional $\QQ$-vector space, $P$ be a finite poset, 
$\Phi\colon P\to V$ as above, and $\Sigma(P,\Phi)$ be an $m$-equidimensional simplicial fan.  Then there is a bijective map:
\begin{align*}
\left\{\text{\parbox[h]{6cm}{\begin{center}submaximal chains of $P$ that\\
lie in a unique maximal chain of $P$
\end{center}}} \right\} 
& \longrightarrow \left\{
\text{\parbox[h]{2.6cm}{\begin{center}
boundary facets 
of $\Sigma(P,\Phi)$
\end{center}}} \right\}
 \end{align*}
 that is given by sending the submaximal chain $p_1<\dots<p_{m-1}$ to $\pos(\{\Phi(p_1),\dots, \Phi(p_m)\})$.  
 
 In addition, since $p_1<\dots<p_{m-1}$ lies in a unique maximal cone, there is a unique $q\in P$ which extends this to a
 maximal chain.  The boundary halfspace determined by this submaximal chain is the halfspace 
 $\{L\geq 0\}$, where $L(\Phi(p_i))=0$ and $L(\Phi(q))>0$. 
Though more than one submaximal chain may determine the same boundary halfspace, each boundary halfspace corresponds to at least one such chain. \hfill \qed
 \end{lemma}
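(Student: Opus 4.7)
The plan is to translate the combinatorial poset data directly into the geometry of the simplicial fan $\Sigma(P,\Phi)$. Since $V$ has dimension $m$ and $\Sigma(P,\Phi)$ is $m$-equidimensional, each maximal cone is a simplicial $m$-dimensional cone. By the hypothesis that $\Phi(p_1),\dots,\Phi(p_s)$ is linearly independent for any chain in $P$, every maximal cone is of the form $\pos(\{\Phi(p_1),\dots,\Phi(p_m)\})$ for some maximal chain $p_1<\dots<p_m$ of $P$, and this gives a bijection between maximal chains of $P$ and maximal cones of $\Sigma(P,\Phi)$.

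Next, I would identify the facets of the fan with submaximal chains. A facet of a maximal simplicial cone $\pos(\{\Phi(p_1),\dots,\Phi(p_m)\})$ is obtained by dropping one generator, producing a cone $\pos(\{\Phi(p_1),\dots,\widehat{\Phi(p_i)},\dots,\Phi(p_m)\})$ indexed by the submaximal chain $p_1<\dots<\widehat{p_i}<\dots<p_m$; distinct deletions give distinct facets by linear independence. Conversely, every facet of $\Sigma(P,\Phi)$ arises this way. The key geometric observation is that, using the linear independence hypothesis again, a facet $\pos(\{\Phi(p_1),\dots,\Phi(p_{m-1})\})$ is contained in a maximal cone $\pos(\{\Phi(q_1),\dots,\Phi(q_m)\})$ if and only if $\{p_1,\dots,p_{m-1}\}\subseteq \{q_1,\dots,q_m\}$ as chains, i.e., if and only if the submaximal chain extends to the given maximal chain. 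Consequently, a facet is a boundary facet (lying in exactly one maximal cone) if and only if its submaximal chain admits a unique extension to a maximal chain. This yields the asserted bijection.

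For the halfspace description, let $F$ be a boundary facet with submaximal chain $p_1<\dots<p_{m-1}$ uniquely extended by some $q\in P$. Then $\Phi(p_1),\dots,\Phi(p_{m-1})$ span an $(m-1)$-dimensional hyperplane $H\subseteq V$, so there is a linear functional $L$, unique up to scalar, with $L|_H=0$. Since $\Phi(p_1),\dots,\Phi(p_{m-1}),\Phi(q)$ is a basis of $V$, we have $L(\Phi(q))\neq 0$; replacing $L$ by $-L$ if necessary gives $L(\Phi(q))>0$. The unique maximal cone containing $F$ is then contained in $\{L\geq 0\}$, which is therefore the boundary halfspace determined by $F$. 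Finally, two distinct boundary facets may lie in the same hyperplane $\{L=0\}$ and hence determine the same halfspace, so the map from submaximal chains to boundary halfspaces is surjective but not necessarily injective.

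I expect the only subtle step to be the cone-containment claim at the heart of the second paragraph, namely that $\pos(\{\Phi(p_1),\dots,\Phi(p_{m-1})\})\subseteq \pos(\{\Phi(q_1),\dots,\Phi(q_m)\})$ forces the $p_i$'s to appear among the $q_j$'s. This is where the hypothesis that $\Phi$ sends chains to linearly independent sets is essential: it ensures that the extremal rays of each simplicial cone are exactly the rays spanned by its defining $\Phi$-values, so containment of cones translates into containment of chains. Once this is in hand, the rest of the argument is bookkeeping.
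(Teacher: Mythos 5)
The paper states this lemma with a $\qed$ but no written proof, so there is no argument in the source to compare against; your task was essentially to reconstruct what the authors regarded as routine. Your reconstruction is correct and is the natural argument: maximal cones correspond to maximal chains, facets correspond to submaximal chains, and the fan axiom ($C_i \cap C_j$ a face of both) plus the fact that a simplicial cone recovers its generators as extremal rays gives the key step, namely that a facet lies in a maximal cone if and only if the corresponding submaximal chain extends to the corresponding maximal chain.

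One small point worth flagging: to pass from ``the extremal rays of the facet equal the extremal rays of a coordinate face of the maximal cone'' to ``the underlying submaximal chain is a subchain,'' you implicitly use that $\Phi$ is injective up to positive scalar on the relevant poset elements (so that $\QQ_{\geq 0}\Phi(p) = \QQ_{\geq 0}\Phi(q)$ forces $p = q$). This is not literally stated as a hypothesis, but it is baked into the paper's standing assumption (in the paragraph preceding the lemma) that maximal chains are in bijection with maximal cones and submaximal chains with facets; and it holds in the applications since distinct degree sequences $d$ produce non-proportional pure diagrams $\pi_d$. So this is a cosmetic gap, not a substantive one, and you correctly identified the cone-containment step as the only place where care is needed.
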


\begin{example}\label{ex:fig1}
Let $P$ be the poset from Figure~\ref{fig:bounded poset}. 
We continue with the notation of the proof of Lemma~\ref{lem:FD}, letting ${D}_1'$ be the simplicial fan on $P$.  
Since $P$ has $12$ maximal chains, it follows that ${D}_1'$ is the union of $12$ simplicial cones (of dimension $9$).
Consider the maximal chain corresponding to the lower left boundary of Figure~\ref{fig:bounded poset}:
there are $7$ submaximal chains that uniquely extend to this maximal chain.  More precisely, there are respectively $0,2,2,0,1,1,0,1$ such submaximal chains
of type \eqref{type1}--\eqref{type01}.
\end{example}

Although simplicial fans are not necessarily convex, we can always construct a convex cone from a simplicial fan. 

\begin{lemma}\label{lem:boundaryD2}
Let $V$ be an $m$-dimensional $\QQ$-vector space, and let $\Sigma$ be an $m$-equidimensional simplicial fan.
Let $\{ \{L_k\geq 0\}\}$ be the set of boundary halfspaces of $\Sigma$.  The convex cone $\bigcap_k \{L_k\geq 0\}$ is a subset of the support of $\Sigma$.
\end{lemma}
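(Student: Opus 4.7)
The plan is to argue by contradiction. Suppose $v \in \bigcap_k\{L_k\geq 0\}$ but $v \notin \supp(\Sigma)$. The idea is to trace a line segment from a generic interior point of $\supp(\Sigma)$ to $v$: the segment must exit $\supp(\Sigma)$ at some first time $s_0 < 1$, and by choosing the segment generically, it will cross a single facet of a maximal cone transversely at $\gamma(s_0)$. Because the segment then leaves $\supp(\Sigma)$, that facet cannot belong to any other maximal cone, so it must be a \emph{boundary} facet, and linearity of the associated functional along the segment will force $L_k(v) < 0$. For the topological language below it is convenient to work in $V\otimes_{\QQ}\mathbb{R}$, which is harmless since everything in sight is cut out by linear inequalities with rational coefficients.

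First I would pick $w$ in the relative interior of some maximal cone of $\Sigma$; this is possible because $\Sigma$ is $m$-equidimensional in the $m$-dimensional space $V$, so every maximal cone is full dimensional. Define $\gamma(t):=(1-t)w+tv$ for $t\in[0,1]$. The set $\{t\in[0,1]:\gamma(t)\in\supp(\Sigma)\}$ is closed and contains $0$ but not $1$; letting $[0,s_0]$ be its connected component containing $0$, we have $s_0<1$ and $u:=\gamma(s_0)$ lies on the topological boundary of $\supp(\Sigma)$. Perturbing $w$ within the interior of its cone, I may further arrange that $\gamma$ misses the codimension-two skeleton of $\Sigma$ and that $v-w$ is not contained in the linear span of any facet of a maximal cone; the set of bad $w$ is a finite union of proper affine subspaces of $V$, so generic $w$ work. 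With this choice, $u$ lies in the relative interior of a single facet $F$ of a maximal cone $C$, and $\gamma$ crosses the hyperplane spanned by $F$ transversely at $u$.

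Since $\gamma(t)\notin\supp(\Sigma)$ for $t$ slightly greater than $s_0$, the side of this hyperplane opposite to $C$ cannot be locally contained in $\supp(\Sigma)$ near $u$; hence $F$ is not shared with a second maximal cone, so $F$ is a boundary facet and $C$ is its unique maximal cone. Let $L$ denote the functional with $\{L\geq 0\}$ the corresponding boundary halfspace, so $L(u)=0$ and $L>0$ on the interior of $C$. For $t$ slightly less than $s_0$ the segment lies in $\supp(\Sigma)$ near $u$, and by the local picture at a relative interior point of $F$ it must lie in $C$, giving $L(\gamma(t))>0$; by transverse crossing, $L(\gamma(t))<0$ for $t$ slightly greater than $s_0$. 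Since $L\circ\gamma$ is affine in $t$ and $s_0<1$, this forces $L(v)=L(\gamma(1))<0$, contradicting $v\in\{L\geq 0\}$. The main technical obstacle is the general-position argument pinning $u$ down in the relative interior of a single facet with a transverse crossing; the remaining manipulations with closed sets and affine maps are routine.
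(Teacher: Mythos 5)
Your proof is correct, and it takes a genuinely different route from the paper: the paper dispatches the lemma in one line by invoking the arguments of Theorem~2.15 in Ziegler's \emph{Lectures on polytopes}, which in fact yield the stronger conclusion that $\bigcap_k \{L_k\geq 0\}$ is the \emph{largest} convex cone contained in $\supp(\Sigma)$, whereas you give a self-contained line-segment argument establishing exactly the stated containment. Your argument is essentially the standard ``walk from a generic interior point and see where you first exit'' technique that underlies Ziegler's proof; writing it out directly makes the lemma independent of the reference and makes visible exactly where each hypothesis is used ($m$-equidimensionality to get full-dimensional cones with nonempty interior and codimension-one facets, finiteness of the fan so that $\supp(\Sigma)$ is closed and the ``bad'' set of starting points is a finite union of positive-codimension sets). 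Two points worth tightening if you wanted a publishable version: the bad locus you must avoid is not literally a finite union of proper affine subspaces --- the cone from $v$ over a codimension-two face is only \emph{contained} in such an affine subspace --- but this containment is all you need for genericity; and in ruling out that the crossing facet $F$ is shared, you are implicitly using that in a simplicial fan two full-dimensional cones sharing a facet $F$ must lie on opposite sides of $\mathrm{span}(F)$ (since otherwise their interiors would overlap), which is exactly why $u$ lying in the relative interior of a shared facet would force $u$ into the topological interior of $\supp(\Sigma)$, contradicting its being a boundary point.
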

\begin{proof}
The arguments in the proof of Theorem~2.15 of~\cite{ziegler} show that 
$\bigcap_k \{L_k\geq 0\}$ is the largest convex cone contained in the support of $\Sigma$. 
\end{proof}

\begin{bibdiv}
\begin{biblist}

\bib{avramov-notes}{article}{
   author={Avramov, Luchezar L.},
   title={Infinite free resolutions},
   conference={
      title={Six lectures on commutative algebra},
      address={Bellaterra},
      date={1996},
   },
   book={
      series={Progr. Math.},
      volume={166},
      publisher={Birkh\"auser},
      place={Basel},
   },
   date={1998},
   pages={1--118},
}

\bib{BHS}{article}{
   author={Backelin, J{\"o}rgen},
   author={Herzog, J{\"u}rgen},
   author={Sanders, Herbert},
   title={Matrix factorizations of homogeneous polynomials},
   conference={
      title={Algebra---some current trends},
      address={Varna},
      date={1986},
   },
   book={
      series={Lecture Notes in Math.},
      volume={1352},
      publisher={Springer},
      place={Berlin},
   },
   date={1988},
   pages={1--33},
}

\bib{otherMRC}{article}{
      author={Barwick, Brett},
      author={Biermann, Jennifer},
      author={Cook II, David},
      author={Moore, W. Frank},
      author={Raicu, Claudiu},
      author={Stamate, Dumitru},
      title={Boij--S\"oderberg theory for non-standard graded rings},
      note={\url{http://www.math.princeton.edu/~craicu/mrc/nonStdBetti.pdf}},
	year={2010}
}

\bib{BEKS-poset}{article}{
      author={Berkesch, Christine},
      author={Erman, Dan},
      author={Kummini, Manoj},
      author={Sam, Steven~V},
      title={Poset structures in Boij--S\"oderberg theory},
      note={\tt arXiv:1010.2663},
      journal={Int. Math. Res. Not. IMRN (to appear)},
      date={2010},
}

\bib{BEKS-local}{article}{
      author={Berkesch, Christine},
      author={Erman, Dan},
      author={Kummini, Manoj},
      author={Sam, Steven~V},
      title={Shapes of free resolutions over a local ring},
      note={\tt arXiv:1105.2244},
      journal={Math. Ann. (to appear)}
      date={2011},
}

\bib{boij-floy}{article}{
   author={Boij, Mats},
   author={Fl{\o}ystad, Gunnar},
   title={The cone of Betti diagrams of bigraded Artinian modules of
   codimension two},
   conference={
      title={Combinatorial aspects of commutative algebra and algebraic
      geometry},
   },
   book={
      series={Abel Symp.},
      volume={6},
      publisher={Springer},
      place={Berlin},
   },
   date={2011},
   pages={1--16},
}

\bib{boij-sod1}{article}{
    AUTHOR = {Boij, Mats},
    AUTHOR = {S{\"o}derberg, Jonas},
     TITLE = {Graded {B}etti numbers of {C}ohen-{M}acaulay modules and the
              multiplicity conjecture},
   JOURNAL = {J. Lond. Math. Soc. (2)},
    VOLUME = {78},
      YEAR = {2008},
    NUMBER = {1},
     PAGES = {85--106},
}

\bib{boij-sod2}{article}{
AUTHOR={Boij, Mats},
AUTHOR = {S{\"o}derberg, Jonas},
TITLE = {Betti numbers of graded modules and the Multiplicity Conjecture in the non-Cohen--Macaulay case},
journal={Algebra Number Theory (to appear)},
NOTE={\tt arXiv:0803.1645},
YEAR = {2008},
}

\bib{eisenbud-ci}{article}{
   author={Eisenbud, David},
   title={Homological algebra on a complete intersection, with an
   application to group representations},
   journal={Trans. Amer. Math. Soc.},
   volume={260},
   date={1980},
   number={1},
   pages={35--64},
}

\bib{eis-ca}{book}{
   author={Eisenbud, David},
   title={Commutative algebra},
   series={Graduate Texts in Mathematics},
   volume={150},
   note={With a view toward algebraic geometry},
   publisher={Springer-Verlag},
   place={New York},
   date={1995},
   pages={xvi+785},
}

\bib{efw}{article}{
  author={Eisenbud, David},
  author={Fl\o ystad, Gunnar},
  author={Weyman, Jerzy},
  title={The existence of equivariant pure free resolutions},
  journal={Ann. Inst. Fourier (Grenoble)},
  volume={61},
  date={2011},
  number={3},
  pages={905\ndash 926}}

\bib{eis-schrey1}{article}{
   author={Eisenbud, David},
   author={Schreyer, Frank-Olaf},
   title={Betti numbers of graded modules and cohomology of vector bundles},
   journal={J. Amer. Math. Soc.},
   volume={22},
   date={2009},
   number={3},
   pages={859--888},
}

\bib{eis-schrey2}{article}{
   author={Eisenbud, David},
   author={Schreyer, Frank-Olaf},
   title={Cohomology of coherent sheaves and series of supernatural bundles},
   journal={J. Eur. Math. Soc. (JEMS)},
   volume={12},
   date={2010},
   number={3},
   pages={703--722},
}

\bib{floystad}{article}{
   author={Fl{\o}ystad, Gunnar},
   title={The linear space of Betti diagrams of multigraded Artinian
   modules},
   journal={Math. Res. Lett.},
   volume={17},
   date={2010},
   number={5},
   pages={943--958},
}

\bib{M2}{misc}{
    label={M2},
    author={Grayson, Daniel~R.},
    author={Stillman, Michael~E.},
    title = {Macaulay 2, a software system for research
	    in algebraic geometry},
    note = {Available at \url{http://www.math.uiuc.edu/Macaulay2/}},
}

\bib{herzog-srinivasan}{article}{
   author={Herzog, J{\"u}rgen},
   author={Srinivasan, Hema},
   title={Bounds for multiplicities},
   journal={Trans. Amer. Math. Soc.},
   volume={350},
   date={1998},
   number={7},
   pages={2879--2902},
}
          
 \bib{shamash}{article}{
   author={Shamash, Jack},
   title={The Poincar\'e series of a local ring},
   journal={J. Algebra},
   volume={12},
   date={1969},
   pages={453--470},
}

\bib{Tate}{article}{
   author={Tate, John},
   title={Homology of Noetherian rings and local rings},
   journal={Illinois J. Math.},
   volume={1},
   date={1957},
   pages={14--25},
}

\bib{Yoshino}{book}{
author = {Yoshino, Yuji},
title = {Cohen-{M}acaulay modules over {C}ohen-{M}acaulay rings},
series = {London Mathematical Society Lecture Note Series},
volume = {152},
publisher = {Cambridge University Press},
place = {Cambridge},
date = {1990},
}

\bib{ziegler}{book}{
   author={Ziegler, G{\"u}nter M.},
   title={Lectures on polytopes},
   series={Graduate Texts in Mathematics},
   volume={152},
   publisher={Springer-Verlag},
   place={New York},
   date={1995},
   pages={x+370},
}

\end{biblist}
\end{bibdiv}
\end{document}